\newtheorem{thm}{Theorem}[section]
\newtheorem{cor}[thm]{Corollary}
\newtheorem{lem}[thm]{Lemma}
\newtheorem{prop}[thm]{Proposition}
\newtheorem{defn}[thm]{Definition}
\newtheorem{conj}[thm]{Conjecture}
\newtheorem{quest}[thm]{Question}
\newtheorem{rem}[thm]{Remark}
\numberwithin{equation}{section}
\newcommand{\defin}[1]{{\bf\emph{#1}}}
\newcommand{\s}{\mathbf s}
\newcommand{\Z}{\mathbb Z}
\newcommand{\N}{\mathbb N}
\newcommand{\CP}{{\mathbb C}{\mathbb P}}
\newcommand{\hra}{\hookrightarrow}
\newcommand{\red}[1]{\textcolor{red}{#1}}
\begin{document}

\title{On the minimal genus problem in four-manifolds}

\author{Andr\'{a}s I. Stipsicz}
\address{R\'enyi Institute of Mathematics\\
H-1053 Budapest\\ 
Re\'altanoda utca 13--15, Hungary}
\email{stipsicz.andras@renyi.hu}

\author{Zolt\'an Szab\'o}
\address{Department of Mathematics\\
Princeton University\\
 Princeton, NJ, 08544}
\email{szabo@math.princeton.edu}

\begin{abstract}
  We study the minimal genus problem for some smooth
  four-manifolds.
\end{abstract}
\maketitle

\section{Introduction}
\label{sec:intro}
For a closed, oriented, smooth four-manifold $X$ the \emph{genus function} (or
minimal genus function) $g_X\colon H_2(X; \Z )\to \N$ assigns to each
homology class the minimal genus of an oriented smooth
surface representative of the class;
that is, if $\alpha \in H_2 (X;\Z )$ then
\[
g_X(\alpha )=\min \{ g(\Sigma )\mid i\colon \Sigma \hra X, 
\ i_*([\Sigma ])=\alpha \} , 
\]
where $\Sigma $ is an oriented, closed, connected surface of genus
$g(\Sigma )$ and $i$ is a $C^{\infty}$ embedding.  It is a standard
fact, that every second homology class in a four-manifold
can be represented by an
embedded, oriented, closed, connected surface, hence the above minimum
provides a finite value.

For some examples of exotic pairs of smooth four-manifolds
the genus functions show very different behavior, so it is expected
that this function codes important geometric information about the
smooth structure of the four-manifold.
In order to formalize this notion we have the following definition:

\begin{defn}
  Let  $X$ and $Y$ be smooth, closed, oriented
  four-manifolds with $H_1(X; \Z )=H_1(Y; \Z)=0$. We say that $X$ and $Y$ have
  \defin{equivalent minimal genus functions} if  there
  exists an isomorphism $f\colon
  H_2(X; \Z )\longrightarrow H_2(Y; \Z )$ with the properties that
  \begin{itemize}
\item $f$ preserves the intersection form:
  $$Q_X(a,b)= Q_Y(f(a),f(b))$$
  for $a,b\in H_2(X; \Z )$, and
  \item $f$ also preserves the minimal genus function: 
  $$ g_X(a) = g_Y(f(a))$$
    for all $a \in H_2(X; \Z )$.
    \end{itemize}
\end{defn}

A natural question regarding the above function is the following:

\begin{quest}\label{quest:kerdes}
  Let  $X$ and $Y$ be smooth, closed, oriented and  simply connected
  four-manifolds and assume that $X$ and $Y$ have equivalent minimal
  genus functions.
  Is there  an orientation preserving diffeomorphism between 
  $X$ and  $Y$?
\end{quest}

\begin{rem}
  Note that the above question only asks about the existence of an
  orientation preserving diffeomorphism $G\colon X \to Y$, and in
  particular does not require that the induced action $G_\ast \colon
  H_2(X)\to H_2(Y)$ agrees with a fixed $f$. Indeed, for $X=Y=K3$ the
  map $f\colon H_2(K3)\to H_2(K3)$ given by $f(a)= -a$ preserves both
  the intersection form and the minimal genus function, but (by work
  of Donaldson, see \cite[Corollary~9.1.4]{DK}), $f$ cannot be induced
  by a diffeomorphism.
\end{rem}

Despite its importance, very little is known in
general about $g_X$.
There are only a handful of simply connected
examples where the entire genus function is known. When $X=S^4$
(or more generally, $X$ is a homotopy $4$-sphere) the
genus function is trivial; in fact, the positive
answer for Question~\ref{quest:kerdes}
in this special case would imply the 
Smooth $4$-dimensional Poincare Conjecture (S4PC).

For $\CP ^2$ (and similarly, for
${\overline {\CP}}^2$) the resolution of the Thom conjecture by
Kronheimer and Mrowka \cite{KrMr} provides the answer: suppose that
$h\in H_2 (\CP ^2; \Z)\cong \Z$ is a generator and $d\neq 0$, then
\[
g_{\CP ^2}(dh)=\frac{1}{2}(\vert d \vert -1)(\vert d \vert -2).
\]
Along similar lines, Ruberman gave a description
of the minimal genus function for ${\CP}^2\#  {\overline {\CP}}^2$
and for $S^2\times S^2$ (the two $S^2$-bundles over $S^2$) in \cite{Ruber}:
if $(a,b)\in H_2({\CP}^2\#  {\overline {\CP}}^2; \Z )\cong \Z ^2$
(where the $\Z$-factors are generated by the projective lines in the factors)
is
a nontrivial homology class with $\vert a \vert >\vert b \vert$,
then
\[
g_{{\CP}^2\#  {\overline {\CP}}^2}(a,b)
=\frac{1}{2}(\vert a \vert -1)(\vert a \vert -2)-\frac{1}{2}\vert b\vert (\vert b \vert -1);
\]
if $\vert a\vert <\vert b \vert$ then the same formula applies
with the roles of $a$ and $b$ reversed, and
$g_{{\CP}^2\#  {\overline {\CP}}^2}(a,\pm a)=0$. In a similar manner, for
$(u,v)\in H_2 (S^2\times S^2; \Z )\cong \Z ^2$ (where the $\Z$ factors are
generated by the $S^2$-factors in $S^2\times S^2$) we have
\[
g_{S^2\times S^2} (u,v)=(\vert u \vert -1)(\vert v\vert -1)
  \]
  if $uv\neq 0$ and
  $g_{S^2\times S^2} (u,v)=0$ if $uv=0$.

Although these four-manifolds seem to be the simplest ones (definitely
with smallest Euler characteristics among simply connected closed
four-manifolds), it is not known if they support distinct smooth
structures.  Application of a suitable sequence of Luttinger surgeries
provide (not necessarily simply connected) smooth four-manifolds $X_n$
($n\in \N$) with homology isomorphic to $H_*(S^2\times S^2; \Z)$,
first constructed in \cite{FPS} by Fintushel, Park and Stern (see
also \cite{APS2xS2}). In addition, the construction can be carried out
so that $X_1$ carries a symplectic structure.  We will review the
construction of these manifolds in Section~\ref{sec:homs2s2} and prove
the following:

  \begin{thm}\label{thm:homologys2s2}
    Suppose that $X_n$ is one of the homology $S^2\times S^2$'s constructed in
    \cite{FPS} (see also Section~\ref{sec:homs2s2}). Using the isomorphism $H_2(X _n; \Z )=\Z^2$, where the two generators
      $x$ and $y$ satisfy $Q_{X_n}(x,x)=Q_{X_n}(y,y)=0$, $Q_{X_n}(x,y) =1$, 
    the minimal genus  on the non-zero homology class $\alpha =ax+by$
    is given by
\[
g_{X_n}(\alpha )=  (\vert a\vert +1)(\vert b\vert +1).
\]

In particular, for all $n,k\in \N$ the manifolds $X_n$ and $X_k$ have
equivalent minimal genus functions.
    \end{thm}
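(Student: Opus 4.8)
The plan is to prove the genus formula $g_{X_n}(\alpha) = (|a|+1)(|b|+1)$ for $\alpha = ax+by \neq 0$, in two matching halves: an upper bound coming from an explicit construction of embedded surfaces, and a lower bound coming from a gauge-theoretic (adjunction-type) inequality. The final assertion of the theorem — that all the $X_n$ have equivalent minimal genus functions — is then immediate, since the formula does not depend on $n$: the identity map on $\Z^2$ (in the chosen basis with $Q(x,x)=Q(y,y)=0$, $Q(x,y)=1$) preserves both the intersection form and the genus function, so it realizes the equivalence for every pair $X_n,X_k$.

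For the upper bound I would construct surfaces by hand inside the model coming from the Luttinger-surgery description reviewed in Section~\ref{sec:homs2s2}. The starting point is the symplectic member $X_1$ and the surfaces representing $x$ and $y$ as tori of square $0$ meeting once; taking $|a|$ parallel copies of the $x$-torus and $|b|$ parallel copies of the $y$-torus and resolving their intersection points yields a connected surface representing $\alpha$, and a careful Euler-characteristic bookkeeping (each resolved intersection adds a handle) is what produces the product form $(|a|+1)(|b|+1)$ rather than the naive $|a||b|$. Since the Luttinger surgeries do not change the relevant homological picture, the same surfaces can be built in every $X_n$, giving the uniform upper bound $g_{X_n}(\alpha) \leq (|a|+1)(|b|+1)$.

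For the lower bound I would invoke the adjunction inequality for the Seiberg--Witten (or Ozsv\'ath--Szab\'o) basic classes of $X_n$. Because $X_1$ is symplectic and the $X_n$ are obtained from it by Luttinger surgeries, which are known to preserve the existence of the symplectic structure and control the basic classes, each $X_n$ carries a nontrivial basic class $K$ whose evaluation on $\alpha$ is computable; feeding $K$ and $\alpha$ into the adjunction inequality $2g-2 \geq |\langle K,\alpha\rangle| + Q(\alpha,\alpha)$ and optimizing over the available basic classes should yield exactly $2g-2 \geq 2(|a|+1)(|b|+1)-2$. Matching the two bounds gives the formula.

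The main obstacle I anticipate is the lower bound, specifically establishing that every $X_n$ — including the non-symplectic ones — has a basic class with the precise pairing against $x$ and $y$ needed to make the adjunction inequality sharp. Luttinger surgery is well understood to preserve symplecticity for $X_1$, but for general $n$ one must track how the Seiberg--Witten invariants and their basic classes transform under the surgeries (presumably via a surgery/wall-crossing formula or a product-formula argument on the relevant torus), and confirm that the class whose adjunction bound is sharp survives nonvanishing for all $n$. Ensuring this independence of $n$ is precisely what forces the genus function to be identical across the family, and hence is the crux both of the formula and of the concluding equivalence statement.
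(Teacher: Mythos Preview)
Your overall two-step plan (construct surfaces for the upper bound, use adjunction for the lower bound) matches the paper, but there are two genuine gaps.

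\textbf{The generators are genus-$2$ surfaces, not tori.} The manifolds $X_n$ are built by Luttinger/torus surgeries on $\Sigma_2^1\times\Sigma_2^2$, and the classes $x,y$ are represented by $\Sigma_2^1\times\{p\}$ and $\{q\}\times\Sigma_2^2$, each of genus $2$. The formula itself tells you this: $g_{X_n}(x)=(1+1)(0+1)=2$. If $x$ and $y$ were tori, your construction (parallel copies plus resolution of $|a||b|$ positive intersections) would produce a surface of genus $1+1+|a||b|-1=|a||b|+1$, not $(|a|+1)(|b|+1)$. With genus-$2$ generators the Euler-characteristic count does give $(|a|+1)(|b|+1)$, because the connected $|a|$-fold cover representing $ax$ already has genus $|a|+1$. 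The paper also treats the case $b=0$ separately by taking such a cover inside a tubular neighbourhood so that it is symplectic.

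\textbf{The adjunction bound from $X_n$ alone is not sharp when $a,b$ have opposite signs.} The two basic classes of $X_n$ are $\pm K$ with $\langle K,x\rangle=\langle K,y\rangle=2$, so adjunction in $X_n$ yields only $g\geq |a+b|+ab+1$, which collapses (e.g.\ $a=1$, $b=-1$ gives $g\geq 0$, far from the required $4$). ``Optimizing over the available basic classes'' does not help: there are only two, and they give the same absolute value. The paper's key extra ingredient is that $-X_n$ \emph{also} has two Seiberg--Witten basic classes. For $n=1$ this is shown by exhibiting an explicit orientation-reversing self-diffeomorphism of $\Sigma_2^1\times\Sigma_2^2$ (identity on the first factor, a reflection on the second) compatible with the eight surgery tori, so that $-X_1$ is again a sequence of Luttinger surgeries on the product and hence symplectic; for general $n$ the same argument feeds into the surgery formula for SW invariants (Proposition~\ref{prop:SWcalc}). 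Adjunction in $-X_n$, where the relevant basic class pairs as $(2,-2)$ on $(x,y)$ and the self-intersection flips sign, then gives $g\geq |a-b|-ab+1=(|a|+1)(|b|+1)$ in the mixed-sign case. Without this orientation-reversal step your lower bound argument does not close.
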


    Notice that (as Proposition~\ref{prop:SWcalc} will show), the
    Seiberg-Witten values do distinguish these four-manifolds; it is
    still an open problem whether this family contains simply
    connected examples.  It is also open if there is a pair $n,k\in
    \N$ so that $X_n$ and $X_k$ are homeomorphic --- it might happen
    that their fundamental groups distinguish them.

    Regarding Question~\ref{quest:kerdes}, as partial evidence for the
    opposite answer, we show that the minimal genus function does
    distinguish (even infinite collections of) exotic smooth
    four-manifolds.  For such example we will rely on two
    constructions on elliptic surfaces. First, recall that for given
    $n\geq 1$, $p, q >0$, $(p,q)=1$, the manifolds $E(n)_{p,q}$ are
    constructed from a simply connected genus $1$ Lefschetz fibration
    $E(n)$ over the sphere with Euler characteristics $\chi(E(n))=12n$
    using logarithmic transformations of multiplicity $p$ and $q$.
    (For more on elliptic surfaces see \cite[Chapter~3]{GS}.)
For $q=1$ the manifold $E(n)_{p,q}$ will be denoted by $E(n)_p$.
    
    A further construction of new four-manifolds out of old one is
    knot surgery \cite{FSknots}: suppose that $X$ contains an embedded
    torus $T$ with trivial normal bundle and with simply connected
    complement. For a knot $K\subset S^3$ replace the tubular
    neighbourhood $\nu (T)=T^2\times D^2$ of $T$ with $(S^3\setminus
    \nu (K))\times S^1$ (using an appropriate gluing map along the
    $T^3$ boundaries, where the meridian of the torus $T$ will be
    matched with the longitude of the knot $K \subset \partial
    ((S^3\setminus \nu (K)) \times S^1)$). We denote the result by
    $X_K$.  For an elliptic surface $E(n)$ we can choose $T$ to be the
    generic fiber of the fibration; the result of the above
    construction then will be denoted by $E(n)_K$.  We will prove the
    following
    
    \begin{thm}\label{thm:Elliptics}
For every  $n\geq 2$ there are infinitely many smooth four-manifolds
all homeomorphic to $E(n)$, distinguished 
by their genus function. Indeed, if $K_m$ denotes the $(2,2m+1)$ torus knot
$T_{2,2m+1}$, then $\{ E(n)_{K_m} \vert m\in \N\} $ have all distinct
genus functions.
    \end{thm}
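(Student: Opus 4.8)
The plan is to reduce the statement to a single numerical invariant extracted from the pair $(Q_X,g_X)$ and to show that this invariant detects the Seifert genus $m$ of $K_m$. First I would record that each $E(n)_{K_m}$ is simply connected with the same intersection form as $E(n)$ (knot surgery along the fiber, whose complement is simply connected, changes neither $\pi_1$ nor $H_2$), so by Freedman all of these manifolds are homeomorphic to $E(n)$; in particular any two of them have the same underlying lattice, and the task is genuinely to separate their genus functions rather than their homeomorphism types. Since the notion of equivalent minimal genus functions allows an arbitrary form-isometry $f$, I must produce a quantity built only out of $Q_X$ and $g_X$ (not out of a chosen basis or a chosen fiber class) whose value changes with $m$.

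The gauge-theoretic input is the set of Seiberg--Witten basic classes, which I would compute with the Fintushel--Stern knot surgery formula. Writing $s=\exp([F])$ for the fiber class $[F]$, one has $\SW_{E(n)}=(s-s^{-1})^{n-2}$ and
$$\SW_{E(n)_{K_m}}=\SW_{E(n)}\cdot \Delta_{K_m}(s^2)=(s-s^{-1})^{n-2}\cdot\sum_{i=0}^{2m}(-1)^i s^{2m-2i}.$$
Every basic class is therefore a multiple of $[F]$, and the extremal one is $\kappa_m=(n-2+2m)[F]$, which occurs with coefficient $+1$ and is hence nonzero; note that $n-2+2m=n-2+2g(K_m)$. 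Thus the only feature of the basic-class set that varies with $m$ is the divisibility $d_m:=n-2+2m$ of the top class $\kappa_m$, and this is exactly what I want the genus function to see.

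Next I would convert this into genus data. The easy half is the lower bound: applying the generalized adjunction inequality with $\kappa_m$ gives, for every $\alpha$ with $\alpha\cdot\alpha\ge 0$ and $\alpha\ne 0$,
$$2g_X(\alpha)-2\ \ge\ \alpha\cdot\alpha + d_m\,\lvert [F]\cdot\alpha\rvert,$$
since all basic classes are proportional to $[F]$. The harder half is the matching upper bound: I would realize this lower bound by explicit surfaces, using that $K_m=T_{2,2m+1}$ is fibered, so $E(n)_{K_m}$ is symplectic with canonical class $\kappa_m$; symplectic representatives of suitable classes then attain adjunction equality, while classes orthogonal to $[F]$ (hence to all basic classes) should be represented by surfaces supported away from the surgery region, realizing $2g_X(\alpha)-2=\alpha\cdot\alpha$. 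Granting this, the genus function satisfies $2g_X(\alpha)-2-\alpha\cdot\alpha=d_m\,\lvert[F]\cdot\alpha\rvert$ for all $\alpha$ with $\alpha\cdot\alpha\ge 0$, so the basis-free quantity
$$D(X):=\min\{\,2g_X(\alpha)-2-\alpha\cdot\alpha\ :\ \alpha\cdot\alpha\ge 0,\ 2g_X(\alpha)-2-\alpha\cdot\alpha>0\,\}$$
equals $d_m=n-2+2m$; the minimum is attained on any $\alpha$ with $[F]\cdot\alpha=1$ and $\alpha\cdot\alpha\ge 0$, which exist because $[F]$ is primitive and $Q_X$ is unimodular.

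Finally, $D(X)$ is manifestly invariant under any isometry $f$ preserving the genus function, so equivalent minimal genus functions force equal values of $D$. As $D(E(n)_{K_m})=n-2+2m$ is strictly increasing in $m$, the functions $g_{E(n)_{K_m}}$ are pairwise inequivalent, which proves the theorem. The main obstacle is the upper-bound step: the lower bounds and the Seiberg--Witten computation are routine, but pinning the genus function down precisely -- in particular ruling out that some class orthogonal to $[F]$ produces a positive value of $2g_X(\alpha)-2-\alpha\cdot\alpha$ strictly smaller than $d_m$ -- requires constructing minimal-genus surfaces, and it is there that the symplectic (fibered) structure and a careful resolution of fiber-plus-section configurations must do the work.
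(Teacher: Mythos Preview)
Your overall strategy---extract from $(Q_X,g_X)$ a single numerical invariant that recovers the top Seiberg--Witten divisibility $d_m=n-2+2m$---is exactly the paper's, and your SW computation and adjunction lower bound are correct. But the gap you name at the end is genuine and is not a detail: for your $D(X)$ to equal $d_m$ you need every nonzero class $\alpha$ with $\alpha^2\ge 0$ and $[F]\cdot\alpha=0$ to satisfy $2g_X(\alpha)-2=\alpha^2$ \emph{exactly}. A single class in $[F]^\perp$ with $0<2g_X(\alpha)-2-\alpha^2<d_m$ would pin $D(X)$ at that value for all large $m$, collapsing your invariant. Proving that equality amounts to solving the minimal-genus problem on all of $[F]^\perp$; neither ``symplectic, hence adjunction is sharp'' (not every such class carries a symplectic representative) nor ``orthogonal to $[F]$, hence supported off the surgery region'' (algebraic intersection zero does not produce a disjoint representative without possibly raising genus) supplies this.

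The paper sidesteps the issue by using a coarser invariant, the \emph{genus profile} $Pr(X)$: the lexicographic minimum of $(g_X(y_1),\dots,g_X(y_{2n-1}))$ over sequences of pairwise $Q_X$-orthogonal classes of even positive square spanning a maximal positive-definite subspace. The key is structural rather than computational: since $[F]^\perp$ carries only $2n-2$ positive directions, any allowed sequence is \emph{forced} to contain some $y_j$ with $[F]\cdot y_j\neq 0$, and adjunction alone then gives $g_X(y_j)\gtrsim m$. To pin down $Pr$ one only has to \emph{exhibit} $2n-2$ specific genus-$2$ surfaces in $[F]^\perp$ together with the one surface (section $+$ Seifert surface $+$ copies of the fiber) meeting $[F]$---not determine $g_X$ on every class orthogonal to the fiber. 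That economy is precisely what lets the paper's argument close where yours does not; if you want to rescue $D(X)$, you would effectively have to prove the full minimal-genus formula on $[F]^\perp$, which is a much harder theorem than the one being proved.
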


    In the closing sections
    of the paper we investigate the minimal genus problem
    for some simply connected four-manifolds with small Euler
    characteristics. These include smooth four-manifolds that
    are homeomorphic to
    ${\CP}^2\# 2 {\overline {\CP}}^2$ or ${\CP}^2\# 3 {\overline {\CP}}^2$,
    constructed by Akhmedov and
    Park \cite{AP} and by Baldridge and Kirk \cite{BK}, respectively.
For surveys on the genus function, see also \cite{DorfLi, Lawson}.

\subsection{Tools used in the proofs}  
In determining the value of the genus function on a homology
class, the Symplectic Thom Conjecture (a theorem proved in \cite{OSzThom})
turns out
to be rather convenient.
\begin{thm} (Ozsv\'ath-Szab\'o, \cite{OSzThom})
    \label{thm:SymThomConj}
  Suppose that $(X,\omega )$ is a closed symplectic four-manifold and
  $\Sigma \subset X$ is a connected, closed symplectic surface of
  genus $g(\Sigma )$. Then $g_X ([\Sigma ] ) =g(\Sigma )$,
  i.e. symplectic surfaces minimize genus in their homology classes.
  \end{thm}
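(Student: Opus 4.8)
The plan is to deploy Seiberg--Witten gauge theory, resting on two pillars: Taubes' theorem that a closed symplectic four-manifold has nonvanishing Seiberg--Witten invariant on its canonical $\Spin^c$ structure, and a generalized adjunction inequality bounding the genus of an embedded surface in terms of a basic class. The symplectic surface $\Sigma$ enters precisely because it satisfies the adjunction formula as an \emph{equality}; it therefore saturates whatever adjunction bound the gauge theory produces, forcing every homologous surface to have at least as large a genus.

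First I would dispose of the case $g(\Sigma)=0$: here $\Sigma$ already realizes genus $0$, the minimum possible value, so $g_X([\Sigma])=0=g(\Sigma)$ with nothing to prove. Assume therefore $g(\Sigma)\ge 1$, and write $K_X$ for the canonical class. Since $\Sigma$ can be made pseudoholomorphic for a compatible almost complex structure, the adjunction formula holds as an equality,
\[
2g(\Sigma)-2=[\Sigma]\cdot[\Sigma]+K_X\cdot[\Sigma].
\]
By Taubes' theorem the canonical $\Spin^c$ structure has $\SW_X=\pm1$, so that $\pm K_X$ both occur as $c_1$ of Seiberg--Witten basic classes (the second via the conjugation symmetry of $\SW$).

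Now let $\Sigma'\subset X$ be any smoothly embedded, oriented, connected surface with $[\Sigma']=[\Sigma]$ and genus $g'=g(\Sigma')$; the goal is $g'\ge g(\Sigma)$. The engine is the generalized adjunction inequality: for a class $\kappa$ of a basic $\Spin^c$ structure and an embedded surface of genus $g'\ge 1$ one has $2g'-2\ge [\Sigma']\cdot[\Sigma']+|\kappa\cdot[\Sigma']|$. Applying this with $\kappa=K_X$, using $|K_X\cdot[\Sigma']|\ge K_X\cdot[\Sigma']$ together with $[\Sigma']=[\Sigma]$, and comparing with the symplectic equality above gives
\[
2g'-2\ \ge\ [\Sigma]\cdot[\Sigma]+K_X\cdot[\Sigma]=2g(\Sigma)-2,
\]
whence $g'\ge g(\Sigma)$, as claimed.

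The main obstacle is establishing the adjunction inequality in the full generality the statement demands, rather than in the textbook regime $b_2^+(X)>1$ and $[\Sigma']\cdot[\Sigma']\ge0$. Two points are genuinely delicate. When $[\Sigma']\cdot[\Sigma']<0$ one \emph{cannot} reduce to the nonnegative case by blowing up, since every class $[\Sigma']+\sum c_ie_i$ has self-intersection $[\Sigma']\cdot[\Sigma']-\sum c_i^2\le [\Sigma']\cdot[\Sigma']$; the inequality must instead be proved at the level of the invariants, reading the blow-up formula as a relation among Seiberg--Witten (or Ozsv\'ath--Szab\'o) invariants. When $b_2^+(X)=1$ the invariant is chamber-dependent, so one must invoke the wall-crossing formula and use that the symplectic form singles out the chamber in which Taubes' nonvanishing holds. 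One must also separately exclude low-genus (in particular spherical) competitors, which is where the $g'\ge1$ hypothesis interacts most subtly with the equality satisfied by $\Sigma$. It is this simultaneous treatment of negative self-intersection and $b_2^+=1$ that forms the technical core of \cite{OSzThom}.
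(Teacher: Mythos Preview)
The paper does not prove Theorem~\ref{thm:SymThomConj}; it is stated with attribution to Ozsv\'ath--Szab\'o and used as a black box, exactly as Theorem~\ref{thm:AdjIneq} is. There is therefore no in-paper proof to compare your proposal against.

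That said, your outline is an accurate summary of the strategy in \cite{OSzThom}: the adjunction \emph{equality} for the symplectic surface, Taubes' nonvanishing for the canonical class, and then the generalized adjunction inequality for an arbitrary smooth competitor. You also correctly locate the real content---extending the adjunction inequality to surfaces of negative square and handling the chamber dependence when $b_2^+(X)=1$---and you are honest that these are deferred to \cite{OSzThom}. As a proof \emph{proposal} this is fine, but it should be understood as a road map rather than a proof: the paragraph you label ``the main obstacle'' is essentially the entire theorem, and nothing in your write-up supplies it. In particular, the exclusion of spherical competitors when $g(\Sigma)\ge 1$ and $[\Sigma]^2<0$ is not something that follows from the standard adjunction inequality you invoke (which has the hypothesis $g'\ge 1$), so that case genuinely requires the machinery of \cite{OSzThom} and cannot be patched by an elementary side argument.
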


The following inequality --- called the \emph{adjunction
  inequality} --- does not determine the genus function
on a specific homology class, but provides bounds and
is applicable in a greater generality. Recall that the
Seiberg-Witten function of a smooth, closed, oriented, simply connected
four-manifold $X$ is a map $SW_X\colon {\rm Spin}^c(X )\to \Z$ which counts
solutions of a certain geometric partial differential equation.
(Here ${\rm Spin}^c(X)$ denoted the set of spin$^c$ structures on $X$.)
For manifolds with $b_2^+>1$ (or $b_2^+=1$ and $b_2^-\leq 9$ and with the
addition that the equations are considered with small perturbation terms)
this map is a smooth invariant. $SW_X$ has finite support, and
 $\s \in {\rm Spin}^c(X)$ with $SW_X(\s )\neq 0$ is called a
{\bf Seiberg-Witten basic class}. The four-manifold $X$ is of Seiberg-Witten
simple type if for a Seiberg-Witten basic class $\s$ 
we have that
$c_1^2(\s)=3\sigma (X)+2\chi (X)$ (where $\sigma (X)$ and
$\chi (X)$ are the signature
and the Euler characteristic of the underlying four-manifold).

\begin{thm}(Adjunction inequality, \cite[Corollary~1.7]{OSzThom})
  \label{thm:AdjIneq}
  Suppose that $X$ is a smooth four-manifold of Seiberg-Witten simple
  type with $b_1(X)=0$ and either with $b_2^+(X)>1$ or with
  $b_2^+(X)=1$ and $b_2^-(X)\leq 9$.  Then for an embedded surface
  $\Sigma \subset X$ with genus $g(\Sigma ) >0$ and a Seiberg-Witten
  basic class $\s\in {\rm Spin}^c(X)$ (i.e. with $SW_X(\s )\neq 0$) we
  have
\begin{equation}\label{eq:adjunction}
\vert \langle c_1(\s ), [\Sigma ]\rangle \vert +[\Sigma ] ^2\leq 2g(\Sigma )-2.
\end{equation}
\end{thm}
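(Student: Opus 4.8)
The plan is to run the standard Seiberg--Witten curvature argument, reducing first to a favourable sign and then extracting the inequality from the Weitzenb\"ock formula, and to isolate the two delicate regimes at the end. First I would arrange that $\langle c_1(\s),[\Sigma]\rangle\ge 0$: replacing $\s$ by its conjugate $\os$ multiplies $SW_X(\s)$ by a sign $(-1)^{(\chi(X)+\sigma(X))/4}$ and reverses $\langle c_1(\s),[\Sigma]\rangle$, so $\os$ is again a basic class and proving the bound for the non-negative sign of the pairing covers both terms of the absolute value. Thus it suffices to show $\langle c_1(\s),[\Sigma]\rangle+[\Sigma]^2\le 2g(\Sigma)-2$ under this sign normalization.

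The core case is $[\Sigma]^2\ge 0$, which I would treat by contradiction. Assume the reverse strict inequality and choose a family of metrics $g_T$ on $X$ that inserts an ever longer neck modelled on the circle bundle $\partial\nu(\Sigma)$, while fixing a product-type metric transverse to and on $\Sigma$ itself. Since $\s$ is basic, for each $g_T$ there is an irreducible solution $(A,\Phi)$ of the Seiberg--Witten equations. The a priori bound from the Weitzenb\"ock formula controls $|\Phi|^2$ by the negative part of the scalar curvature, which on the stretched region is governed by the geometry of $\Sigma$; integrating the curvature identity $F_A^+=q(\Phi)$ against $[\Sigma]$ and invoking the Gauss--Bonnet relation $\int_\Sigma s_\Sigma\, dA=4\pi(2-2g(\Sigma))$ produces, as $T\to\infty$, exactly the bound $\langle c_1(\s),[\Sigma]\rangle+[\Sigma]^2\le 2g(\Sigma)-2$. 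If the strict opposite held, the estimates would force $\Phi$ to vanish on an arbitrarily large region, so no irreducible solution could survive the stretching; but reducibles are excluded for a generic metric once $b_2^+\ge 1$, contradicting $SW_X(\s)\ne 0$.

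The genuinely new phenomenon, and the main obstacle, is $[\Sigma]^2<0$: the neck-stretching estimate above fundamentally needs a non-negative normal Euler number and does not apply directly. This is precisely where the refined blow-up and gluing relations for Seiberg--Witten invariants of \cite{OSzThom} are required, comparing $X$ with its blow-ups so that the exceptional classes convert the negative-square problem into a controllable one, and feeding this into the nonvanishing supplied by the simple-type hypothesis. A second delicate point, present throughout, is the $b_2^+=1$, $b_2^-\le 9$ regime, where the invariant is chamber-dependent and reducibles can appear by wall-crossing; here the constraint $b_2^-\le 9$ is exactly the positivity $c_1^2(\s)=3\sigma(X)+2\chi(X)=9-b_2^-(X)\ge 0$, which together with the simple-type assumption is what pins down a well-defined small-perturbation invariant and guarantees the irreducible solutions needed to run the estimate. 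I expect organising the blow-up bookkeeping in the negative-square case, so that the $c_1$-pairings and self-intersections transform uniformly, to be the hardest part to make rigorous.
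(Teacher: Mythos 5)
A preliminary remark: the paper contains no proof of this statement at all --- it is quoted, with attribution, as Corollary~1.7 of \cite{OSzThom} --- so your proposal can only be judged against the argument in that reference. Your treatment of the two easy ingredients is essentially right: conjugation symmetry ($SW_X(\os)=\pm SW_X(\s)$) reduces to the case $\langle c_1(\s),[\Sigma]\rangle\geq 0$, and for $[\Sigma]^2\geq 0$ the neck-stretching/Weitzenb\"ock argument along the circle bundle $\partial\nu(\Sigma)$ is the classical Kronheimer--Mrowka/Morgan--Szab\'o--Taubes proof (with the caveat that reducibles must be avoided along the whole family of stretched metrics, which is where $b_2^+>1$, or the chamber discussion when $b_2^+=1$, enters). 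Your identification of $b_2^-\leq 9$ with $c_1^2(\s)=3\sigma(X)+2\chi(X)=9-b_2^-(X)\geq 0$ under the simple type hypothesis is also the correct explanation of that hypothesis.

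The genuine gap is the case $[\Sigma]^2<0$, which is exactly what makes this statement Corollary~1.7 of \cite{OSzThom} rather than the classical adjunction inequality. For that case you offer no argument: you invoke ``the refined blow-up and gluing relations of \cite{OSzThom}'', i.e.\ the machinery of the very paper whose theorem you are asked to prove, which is circular. Worse, the mechanism you describe cannot be made to work: blowing up and summing with exceptional classes only \emph{lowers} self-intersection (the class $[\Sigma]\pm E_1\pm\cdots\pm E_k$ in $X\# k{\overline{\CP}}^2$ has square $[\Sigma]^2-k$), and raising the square would require connected sum with $\CP^2$, which kills the Seiberg--Witten invariants; so no blow-up bookkeeping converts a negative-square surface into a non-negative-square one. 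What \cite{OSzThom} actually proves is a relation of the form $SW_X(\s+\PD[\Sigma])=\pm SW_X(\s)$, valid whenever $g(\Sigma)\geq 1$, $[\Sigma]^2<0$ and the adjunction bound is violated for $\s$; this is established by stretching along the circle bundle and analyzing the moduli space on the disk-bundle side. The negative-square case of the adjunction inequality then follows from a single application of this relation: a basic class of a simple-type manifold satisfies $c_1^2(\s)=2\chi(X)+3\sigma(X)$, while the relation produces a new basic class with $c_1^2(\s+\PD[\Sigma])=c_1^2(\s)+4\left(\langle c_1(\s),[\Sigma]\rangle+[\Sigma]^2\right)>c_1^2(\s)$, contradicting simple type. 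This relation-plus-simple-type argument, not blow-up bookkeeping, is the missing core of your proof.
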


In our arguments we will frequently
use the fact that for a self-diffeomorphism $f\colon
  X\to X$ we have $g_X(\alpha )= g_X(f_*(\alpha ))$ for all $\alpha
  \in H_2(X;\Z )$. Furthermore, if $S\subset X$ is an embedded sphere
  of self-intersection $(\pm 1)$ or $(\pm 2)$ in $X$, then it is easy
  to construct a self-diffeomorphism $f\colon X\to X$, for which the
  induced map $f_*$ is just reflection in the class represented by
  $S$. Consequently, if $S\subset X$ is an embedded sphere with
  self-intersection $(\pm 1)$ or $(\pm 2)$, then the map
\[
\alpha \mapsto \alpha - 2\frac{[S]\cdot \alpha}{[S]\cdot [S]} [S]
\]
on $H_2 (X; \Z )$ is induced by a diffeomorphism.

The paper is organized as follows.  In Section~\ref{sec:homs2s2} we
examine the homology $S^2\times S^2$'s and verify
Theorem~\ref{thm:homologys2s2}.  In Section~\ref{sec:EllipticSurfaces}
we study the genus function of elliptic surfaces and of the manifolds
we get by logarithmic transformations or by knot surgeries on them. We
also prove Theorem~\ref{thm:Elliptics}.  Section~\ref{sec:CPk} is
devoted to the partial understanding of the genus function on the
twice and three-times blown up complex projective plane. The exotic
manifolds $AP$ and $BK$ (and the further infinite families of exotic
smooth structures on ${\CP}^2\# 2 {\overline {\CP}}^2$ and on
${\CP}^2\# 3 {\overline {\CP}}^2$), and their genus functions are
studied in Sections~\ref{sec:AP} and \ref{sec:BK}.

  {\bf Acknowledgements}: AS was partially supported by the
  \emph{\'Elvonal (Frontier) grant} KKP126683 of the NKFIH. He would
  also acknowledge the hospitality of MSRI/SLMath in Berkeley, where
  part of this project was carried out. The second author was
  partially supported by NSF grant DMS-1904628. We would like to thank
  the referee for their suggestions which significantly improved the
  presentation of the results in the paper.

  \section{Homology $S^2\times S^2$'s}
  \label{sec:homs2s2}
The application of torus (and in particular, Luttinger) surgery, or
some variation of it, leads to the construction of four-manifolds
which have the same homology as $S^2\times S^2$, but potentially
different fundamental group. Such examples were constructed in
\cite{FPS} (see also \cite{APS2xS2}).

First we briefly recall the concept of surgery along a torus in a
four-manifold from \cite{FPS}.  Assume that $X$ is a smooth
four-manifold and $T\subset X$ is an oriented torus with trivial
normal bundle (that is, $[T]^2=0$).  Fix a framing $f$ on $T$, i.e., a
trivialization of its normal bundle.  For an oriented simple closed
curve $\gamma \subset T$ therefore there is a unique push-off
${\overline \gamma}_f$ of $\gamma$ to the boundary $T^3$ of a small tubular
neighbourhood of $T$.  For a rational number $\frac{p}{q}$, the
manifold $(X, T, f, \gamma , \frac{p}{q})$ is defined by deleting a small
tubular neighbourhood of $T$, and gluing back $T^2\times D^2$ (which can
be viewed as the union of a four-dimensional 2-handle, two 3-handles
and a 4-handle) by attaching the 2-handle along a simple closed curve
representing the homology class $p[\mu _T] \pm q [{\overline \gamma }_f]$
(where $\mu _T$ is a meridional curve to $T$, regarded as a subset of
the boundary of the trivial neighbourhood, oriented so that it has
linking number 1 with $T$).

In the special case, when $(X, \omega )$ is a symplectic four-manifold
and $T\subset (X, \omega )$ is a Lagrangian torus (that is, $\omega
\vert _T=0$), the torus  $T$ comes with a canonical framing.
Indeed, by the Lagrangian neighbourhood
theorem an appropriate small normal neighbourhood of $T$ is
symplectomorphic to a neighbourhood of the zero-section of the
cotangent bundle $T^*T^2\to T^2$ of the 2-torus $T^2$ (when the total
space of this bundle is
equipped with the tautological symplectic form coming from its
Liouville 1-form). As $T^*T^2$ is trivialized, this identification
equips $T\subset X$ with its \emph{Lagrangian framing}.
(Pushing $T$ along this framing we get further Lagrangian
tori in $(X, \omega )$.) As is explained in \cite{ADK},
using this framing and applying
$\frac{1}{k}$  surgery along $T$ (for any choice of $\gamma$) the resulting
four-manifold $(X, T, \gamma , \frac{1}{k})$ will admit a symplectic structure
which is equal to $\omega$ outside of a suitably chosen neighbourhood of $T$.
This symplectic construction is frequently referred to as \emph{Luttinger 
  surgery} \cite{ADK, Lut}.

\subsection{The construction}
For the construction of the homology $S^2\times S^2$'s in
\cite[Section~4]{FPS}, consider two genus 2 surfaces $\Sigma ^1_2,
\Sigma _2^2$, and equip their first integral homologies with the
symplectic bases $a_1, b_1, a_2, b_2$ and $c_1, d_1, c_2, d_2$,
respectively.  Represent these homology classes by oriented simple
closed curves (denoted by the same letters). Equip the direct product
$\Sigma ^1 _2\times \Sigma ^2_2$ with the direct product symplectic
form.  Notice that in this symplectic four-manifold the product of an
oriented simple closed curve $\gamma _1$ in $\Sigma _2^1$ and $\gamma _2$ in
$\Sigma _2^2$ provides an oriented Lagrangian torus
$T_{\gamma _1,\gamma _2}\subset X$, hence using the chosen bases we see 16 tori
which are in 8 hyperbolic pairs. In addition,
by considering small disjoint displacements of the curves in $\Sigma ^i _2$,
tori with algebraic intersection
number 0 can be chosen to be disjoint. Apply now the following eight
Luttinger surgeries, all with the choice $k=\pm 1$: 
\begin{itemize}
\item $(T_{a_1,c_1}, a_1), (T_{b_1,c_1}, b_1), (T_{a_2,c_2}, a_2),
  (T_{b_2,c_2}, b_2)$,
\item $(T_{a_2,c_1}, c_1), (T_{a_2,d_1}, d_1), (T_{a_1,c_2}, c_2),
  (T_{a_1,d_2}, d_2)$;
  \end{itemize}
the first four with coefficient $k=-1$ and the last four with $k=1$,
resulting in the four-manifold $X$.
As each torus comes with a dual torus, the meridional curves for each
are homologically trivial. Therefore after the Luttinger surgery the
chosen curve will represent the trivial element in first homology,
hence $H_1(X; \Z )=0$.
By \cite[Proposition~2.2]{ADK} 
$X$ will admit a symplectic structure so that
the surfaces $\Sigma _2^1\times \{ p\}, \{q\} \times \Sigma _2^2$
are symplectic submanifolds.
The intersection form (given in the basis
$\Sigma _2^1\times \{ p\}, \{q\} \times \Sigma _2^2$) is a hyperbolic
pair, hence the manifold is spin and its cohomology ring is isomorphic to
$H^*(S^2\times S^2; \Z )$. By changing the last Luttinger surgery
(on $(T_{a_1,d_2}, d_2)$)  to a torus surgery with coefficient $n$ we get
a four-manifold $X_n$, which for $n>1$ does not carry a natural symplectic
structure.

We claim that the four-manifold $-X$ (the manifold $X$ constructed
above, with its orientation reversed) also admits a symplectic
structure. Indeed, consider the map $f\colon \Sigma _2^1\times \Sigma
_2^2\to \Sigma _2^1\times \Sigma _2^2$ which is given by the identity
on the first factor and by the orientation reversing diffeomorphism on
the second factor given by the reflection which leaves $c_1,c_2$
fixed, and sends $d_1$ and $d_2$ to $-d_1$ and $-d_2$.  The map $f$ is
an orientation reversing self-diffeomorphism of $\Sigma _2^1\times
\Sigma _2^2$, and all tori in $X$ we used in the Luttinger surgeries
above are invariant under $f$: the tori not involving $d_1$ and $d_2$
are fixed pointwise, while the tori $T_{a_2, d_1}$ and $T_{a_1,d_2}$
stay the same as subsets. Let $W$ denote the complement of the tubular
neighborhood of the eight tori in $X$ along which we perform Luttinger
surgeries, and note that the above map $f$ restricted to $W$ gives a
map form $W$ to $-W$. Let $\delta_i$ for $1\leq i\leq 8$ denote the
curves in the boundary of $W$ that define the Luttinger surgeries on
$X$.  Then using surgeries along $f(\delta_i)$ in $-W$ will give
$-X$. Since the map $f$ carries the meridians of the tori to meridians
(with potentially reversing their orientation) it follows that the
surgeries on $-W$ are still Luttinger surgeries, and $f$ extends
to those surgeries (as an orientation reversing map).  In particular $-X$
can be presented as a sequence of Luttinger surgeries along Lagrangian
tori in $\Sigma _2^1\times \Sigma _2^2$, therefore it is also
symplectic.

\begin{rem}
 Note that
 the above argument shows that the
 manifold $X$ admits a symplectic structure with both
 orientations, yet it is not diffeomorphic
  to any surface bundle.  Such examples were already known to exist.
\end{rem}

In the same manner, it follows that $-X_n$ is given from the product
of two genus-2 surfaces by a sequence of seven Luttinger surgeries and
a torus surgery with some integral coefficient, similarly to the
construction of $X_n$.

The property of admitting symplectic structure with both
orientation allows us to compute the genus function $g_X$ of $X$:
note that the Luttinger surgeries are
in the complement of the two genus 2 surfaces
$h_1=\Sigma _2^1\times \{ p\}$ and $h_2=\{ q\}\times \Sigma _2^2$
of $\Sigma _2^1\times \Sigma _2^2$ (for suitably chosen $p\in \Sigma _2^2$
and $q\in \Sigma _2^1$),
hence provide a basis of $H_2(X; \Z )$ with symplectic representatives.

\begin{proof}[Proof of Theorem~\ref{thm:homologys2s2} for $X=X_1$]
  The two generators $h_1, h_2$ can be represented by symplectic
  genus 2 surfaces. If for $\alpha =ah_1+bh_2$
  both $a$ and $b$ are positive, then by smoothing
  the intersection points of push-offs, we get a symplectic surface of the
  claimed genus. If both $a$ and $b$ are negative, we can take the negative
  of the homology class. If one coefficient is positive and the other is
  negative, then by reversing the orientation on the four-manifold we
  are in one of the previous cases. (Notice that here we use the fact that
  $-X$ is constructed in a similar fashion; in particular, it is symplectic.)
  Finally if one of the coefficients
  is zero, we represent the homology class near the
  other surface with an appropriate cover,
  and choosing it within a small neighbourhood, the cover can be assumed
  to be symplectic. Then simple Euler characteristic computation verifies
  the claimed formula.
    \end{proof}

By determining the Seiberg-Witten basic classes, the same result
can be shown to hold for all $X_n$.

\begin{prop}\label{prop:SWcalc}
  The smooth four-manifold $X_n$ admits two Seiberg-Witten basic classes,
  and so does $-X_n$ obtained by reversing the orientation on $X_n$.
\end{prop}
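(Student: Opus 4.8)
The plan is to determine $SW_{X_n}$ by combining Taubes' theorem for the symplectic member $X_1$ with the torus–surgery formula of Morgan–Mrowka–Szab\'o, exploiting that the whole family arises as fillings of one complement. Write $Y$ for the symplectic four-manifold obtained from $\Sigma_2^1\times\Sigma_2^2$ by performing only the first seven of the eight surgeries, and let $T=T_{a_1,d_2}$ be the remaining Lagrangian torus with its chosen curve $d_2$; then $X_n$ is the result of $\frac1n$-surgery on $T\subset Y$, with $X_1$ symplectic. A preliminary bookkeeping step: since $\chi=4$ and $\sigma=0$ are preserved, $Y$ has $b_1(Y)=1$ and $b_2^+(Y)=2>1$, which is convenient, while the two genus-$2$ surfaces $h_1,h_2$ survive (in their classes, with square $0$) in every $X_n$ because all surgeries are done in their complement.

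First I would constrain the basic classes. As $X_n$ is a homology $S^2\times S^2$ it is spin with even intersection form, so every characteristic class is even; writing a candidate as $c_1=a\,h_1+b\,h_2$ gives $c_1\cdot h_1=b$, $c_1\cdot h_2=a$ and $c_1^2=2ab$, with $a,b$ even. The adjunction inequality (Theorem~\ref{thm:AdjIneq}, applicable since $b_2^+(X_n)=1$ and $b_2^-(X_n)=1\le 9$, once simple type is in hand) applied to the genus-$2$ surfaces forces $|a|,|b|\le 2$, hence $a,b\in\{-2,0,2\}$; the simple-type value $c_1^2=2\chi+3\sigma=8$, i.e. $ab=4$, then isolates $a=b=\pm2$. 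Thus the only possible basic classes are $\pm K$ with $K=2h_1+2h_2$, and the conjugation symmetry $SW(\bar{\mathfrak s})=(-1)^{(\chi+\sigma)/4}SW(\mathfrak s)=-SW(\mathfrak s)$ pairs $K$ with $-K$, so $X_n$ has either zero or exactly two basic classes.

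Next I would show nonvanishing and compute the values. For $X_1$ (and, by the mirror construction of the excerpt, for $-X_1$) Taubes' theorem in the $b_2^+=1$ setting gives $SW^+_{X_1}(\pm K)=\pm1$, so $\pm K$ are genuine basic classes of $X_1$. To propagate this to all $n$ I would apply the Morgan–Mrowka–Szab\'o formula to the fillings of $Y\setminus\nu(T)$: it expresses $SW_{X_{1/n}}=SW_{X_{1/0}}+n\,SW_{X_{0/1}}$, an $n$-independent base plus $n$ times the invariant of the $0$-surgered manifold $X_0$ (which carries $b_1>0$). The adjunction bound above pins the support of both terms onto $\pm K$, so $SW_{X_n}$ is supported exactly on $\pm K$; matching the $n=1$ value from Taubes then yields a value that grows linearly in $n$, namely $SW_{X_n}(\pm K)=\pm n$. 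This is nonzero for every $n\ge1$, so the basic classes are precisely $\pm K$, and the values distinguish the $X_n$. Running the identical argument for the orientation-reversed family — legitimate because the excerpt exhibits $-X_n$ as a sequence of Luttinger/torus surgeries on $\Sigma_2^1\times\Sigma_2^2$ with $-X_1$ symplectic — gives two basic classes for $-X_n$.

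The main obstacle is the $b_2^+=1$ subtlety. The invariants of $X_n$ and $-X_n$ are only the small-perturbation invariants, which live in a single chamber and are genuinely sensitive to orientation, so one cannot read off $SW_{-X_n}$ from $SW_{X_n}$. The crucial input making the argument work on both sides is exactly that both $X_1$ and $-X_1$ carry symplectic structures, fixing the correct chamber in each orientation via Taubes, while the intermediate manifold $Y$ (with $b_2^+=2>1$) lets the surgery formula be applied with honest invariants before specializing to the $b_2^+=1$ fillings. Care is likewise needed in interpreting the invariant of the $b_1>0$ manifold $X_0$ that furnishes the $n$-dependent increment, and in verifying that $X_n$ has simple type so that the adjunction constraint is available.
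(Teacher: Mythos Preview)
Your approach differs from the paper's and has a genuine gap at the nonvanishing step. The paper does not invoke Taubes on $X_1$; it applies the surgery formula at \emph{all eight} surgeries, starting from $\Sigma_2^1\times\Sigma_2^2$ where $SW(s_0)=\pm1$ is known directly. At each stage the formula reads $F(p,1)=p\,F(1,0)+F(0,1)$ (with $a$ the meridian), and the observation that does the real work is that the auxiliary filling $M(0,1)$ contains an embedded torus of square zero on which $c_1$ evaluates as $2$, forcing $F(0,1)=0$ by adjunction. Hence $F(p,1)=p\,F(1,0)$, the invariant passes through each Luttinger surgery unchanged, and picks up the factor $\pm n$ only at the final step.

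Your version applies the surgery formula only at the last torus and feeds in the single datum $SW_{X_1}(\pm K)=\pm1$. But the relation $SW_{X_n}(K)=A+nB$ has two unknowns, and one equation at $n=1$ does not determine them. Nothing you wrote rules out $A=\pm1$, $B=0$ (giving $SW_{X_n}(K)\equiv\pm1$, so the $X_n$ would not even be distinguished) or $A=2$, $B=-1$ (vanishing at $n=2$, so $X_2$ would have no basic classes at all). The leap to ``$SW_{X_n}(\pm K)=\pm n$'' is therefore unjustified without a second input: either the vanishing of one term --- precisely the torus argument above --- or an independent computation of $F(1,0)$, e.g.\ by applying Taubes to the symplectic manifold $Y$ (which has $b_2^+(Y)=2$) and checking that the relevant spin$^c$ structure there is the canonical one. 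You note that ``care is needed'' with $X_0$, but this is exactly where a missing idea must go, not merely care. A smaller issue: you use simple type to force $c_1^2=8$ before establishing it; adjunction on the square-zero surfaces $h_1,h_2$ alone yields $|a|,|b|\le2$ but not $ab=4$, so excluding classes such as $2h_1$ or $0$ requires a separate argument.
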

\begin{proof}
  The argument uses repeatedly a surgery formula for the Seiberg-Witten
  invariants, see \cite{MMSz, T}.

  For the version we need, we  fix a smooth closed oriented compact
  four-manifold $M$ with three-torus boundary $\partial M = T^3$, and a
  spin$^c$ structure {\bf s} whose first Chern class restricts trivially
  to $\partial M$. Fix two homology classes $a, b \in H_1(T^3)$ given
  by $a= [S^1\times pt\times pt]$ and $b = [pt\times S^1\times
    pt]$. Given a pair of integers $p,q$, where $(p,q) \in \Z\times
  \Z$ is a primitive element, let $M_{p,q}$ denote the effect of
  torus surgery, where we glue $M$ and $D^2\times T^2$ along $\phi$ to get
  $M_{p,q} = M\cup _\phi D^2\times T^2$, so that in homology $\phi
  _{\ast}$ maps $[\partial (D^2)]$ to $pa+qb$. Let $S(p,q)$ denote the set
  of spin$^c$ structures on $M_{p,q}$ whose restriction to $M$ agrees
  with ${\bf s}$ and define $F(p,q)$ as the sum of the Seiberg-Witten
  invariants of classes in $S(p,q)$. Then, according to \cite{MMSz}
\begin{equation}\label{eq:formula}
  F(p,q)= p\cdot F(1,0)+ q\cdot F(0,1).
\end{equation}
  The manifolds $X_n$ in the theorem are all constructed from
  $\Sigma_2\times \Sigma_2$ by using torus surgery eight times. Let
  $s_0$ denote the spin$^c$ structure on $\Sigma_2\times \Sigma_2$,
  with $c_1(s_0)$ evaluating as $2$ on both $[\Sigma_2\times pt]$ and
  $[pt \times \Sigma _2]$.  Note that the Seiberg-Witten invariant of
  $s_0$ is $\pm 1$.  Now we use Formula~\eqref{eq:formula}
  inductively. During these surgeries we use $a$ to be the meridional
  circle. So by choosing $b$ appropriately we can assume the effect of
  the torus surgery is given by $M(p,1)$, where $p=\pm 1$ in the first
  seven steps and $p=\pm n$ in the last step. The crucial observation
  is that in all these steps the manifolds $M(0,1)$ contain an
  embedded torus with self-intersection $0$ with the property that the
  first Chern class of $s$ evaluates as 2 on it. It follows from the
  adjunction inequality that $F(0,1) = 0$, so
  $$F(p,1) = p\cdot F(1,0).$$ Since all the $8$ surgeries are along
  tori that represent primitive second homology classes that are
  killed under the surgery, it follows that there is a unique spin$^c$
  structure in $S(p,1)$. This means that if we look at the nine
  different 4-manifolds $Y_i$, $i=0,..,8$ starting with
  $\Sigma_2\times \Sigma_2$ and ending in $X_n$, there is always a
  unique spin$^c$ structure $s_i$ that corresponds to $s_0$.  During the
  inductive step we see that $Y_i= M(1,0)$ and $Y_{i+1} = M(p,1)$. It
  follows that the Seiberg-Witten invariant of $s_8$ in $X_n$ is
  equal to $\pm n$.

  We proved that $s_8$ and $-s_8$ are two non-trivial Seiberg-Witten
  classes on $X_n$. Since the surgery tori are all disjoint from $pt
  \times \Sigma _2$ and $\Sigma_2\times pt$, it follows from the
  adjunction inequality that there are no other Seiberg-Witten basic
  classes.
  \end{proof}

\begin{proof}[Proof of Theorem~\ref{thm:homologys2s2} for $X_n$]
  The argument of the proof for $X=X_1$ applies almost
  verbatim: we consider the same surfaces constructed in that proof,
  and apply the adjunction inequality of Theorem~\ref{thm:AdjIneq}
  (providing lower bounds for values of the genus function) to verify
  that the constructions indeed provide surfaces minimizing genus.
  As the Seiberg-Witten calculations apply for $X_n$ and $-X_n$ as well,
  minimality of the genus of the constructed representatives
  follows from the respective adjunction formulae.
\end{proof}

\section{Elliptic surfaces}
\label{sec:EllipticSurfaces}
There are various ways to get invariants for an oriented, closed,
connected four-dimensional manifold $X$ from $g_X$ which are easier to
compare than the genus function itself. Below we consider
one such example. Let $Q_X$ denote the intersection
form of $X$ and $k=b_2^+(X)$.
\begin{defn}
  A $k$ element sequence ${\bf y} = (y_1,\ldots , y_k)$ with $y_i\in
  H_2(X, {\bf Z})$ is called an \defin{allowed sequence} if the span of
  $y_1,\ldots ,y_k$ is a positive definite $k$-dimensional subspace of
  $H_2(X, {\bf Z})$, and for all $i \neq j$ we have $Q_X(y_i,y_j)=0$
  and $Q(y_i,y_i)$ is even.
\end{defn}
  For an allowed sequence we associate a tuple of $k$ integers
  $$Pr(X,{\bf y}) =  (a_1, \ldots , a_k)$$
  where $a_i = g_X(y_i)$.

  \begin{defn}
  The \defin{genus profile} of $X$ is defined as the smallest among all the corresponding
  tuples of $k$ (with $k=b_2^+(X)$) integers in the lexicographical ordering:
  $$Pr(X) = min _{\bf y} Pr(X,{\bf y}).$$
  \end{defn}
  
  As an example, the genus profile of the $K3$ surface is $(2,2,2)$.
  A further example is given by $X = E(2)_{p}$ with $p$ odd,
  the $K3$ surface after a multiplicity $p$ logarithmic transformation
  has been applied.
  The minimal genus profile of $X$ will be $(2,2,f(p))$,
  since we can find two genus $2$ surfaces that are disjoint from each other
  and from the fiber of the fibration and have self-intersection $2$.
The function $f(p)$ can be estimated as follows:
  There is a  Seiberg-Witten basic class of the form $(p-1)\cdot z$ where $z\in H^2(X)$ has square zero and $z\neq 0$.
  Let ${\bf y}= (y_1,y_2,y_3)$ be an allowed sequence for
  which $$Pr(X)=Pr(X,{\bf y}).$$
  Using the adjunction inequalities we see that $z\cdot y_1=z\cdot y_2=0$,
  implying $z\cdot y_3 \neq 0$.
  Let $g$ denote the minimal genus of $y_3$. Using the the adjunction inequality we have 
  $$ 2g-2 \geq Q_X(y_3,y_3) + p-1 \geq p+1.$$
  It follows that the minimal genus is at least $(p+3)/2$.
  As $f(p)$ goes to infinity as $p\to \infty$, this argument already verifies
  that there are infinitely many $E(2)_p$'s with distinct genus functions.
  A similar, more explicit computation of the genus profile provides the
  proof of Theorem~\ref{thm:Elliptics}:
  
    \begin{proof}[Proof of Theorem~\ref{thm:Elliptics}]
      Consider $E(n)_{K_m}$, the result of knot surgery along the
      generic fiber of the elliptic fibration on $E(n)$ (a torus with
      trivial normal bundle), with $K_m=T_{2,2m+1}$ the alternating
      torus knot with crossing number $2m+1$ ($m\geq 1$). As the knot
      $K_m$ admits a Seifert surface of genus $m$, there is a surface
      in $E(n)_{K_m}$ of genus $m$ intersecting the generic fiber once
      and with self-intersection $-n$: we get this surface by gluing
      the section of $E(n)$ with the Seifert surface of $K_m$. By the
      computation of the Seiberg-Witten invariants of $E(n)_{K_m}$ in
      \cite{FSknots}, the adjunction inequality implies that this
      surface is of minimal genus in its homology class. The same holds
      for the surface we get by adding $k=\lceil \frac{n}{2}\rceil$
      copies of the fiber, so the result will have genus $m+k$ and
      positive self-intersection. This then shows that
      $Pr(E(n)_{K_m})= (2, \ldots , 2,k+m)$ (with $2n-2$ copies of
      $2$), verifying the result.
\end{proof}

\section{The genus function for ${\CP}^2\# 2 {\overline {\CP}}^2$ and
${\CP}^2\# 3 {\overline {\CP}}^2$}
\label{sec:CPk}

The genera of homology classes in ${\CP}^2\# 2 {\overline {\CP}}^2$
have been studied in \cite{Gao} and  \cite{LiLi}, see also \cite{Wall}.
Indeed, the genera of classes in
$H_2 ( \CP ^2\# 2 {\overline {\CP}}^2; \Z )$
with non-negative self-intersection
can be determined by a simple algorithm. To explain this algorithm,
we introduce the following
conventions. A homology class
$\alpha \in H_2( {\CP}^2\# 2 {\overline {\CP}}^2; \Z )$ will be
denoted by a triple $(a, b_1, b_2)$, reflecting the
decomposition
\[
H_2( {\CP}^2\# 2 {\overline {\CP}}^2; \Z ) =H_2({\CP}^2; \Z )
\oplus H_2( {\overline {\CP}}^2; \Z )\oplus 
H_2( {\overline {\CP}}^2; \Z )\cong \Z ^3.
\]

In $H_2( {\CP}^2\# 2 {\overline {\CP}}^2; \Z )$
the classes with one
coordinate $\pm 1$ and the others 0 can be obviously represented by
spheres. Therefore we can assume that in computing $g_{{\CP}^2\# 2
  {\overline {\CP}}^2}$, we have $a, b_1, b_2\geq 0$: otherwise we apply
a reflection to the coordinate in question (which is
generated by a self-diffeomorphism) to reverse its sign.
In the same way, by applying a diffeomorphism interchanging the two
${\overline {\CP}}^2$
components if necessary, we can assume
$b_1\geq b_2$.

{\bf{Genera of classes with $\alpha ^2 \geq 0$.}}
Suppose that $x=(a,b_1, b_2)$ has $x^2=a^2-b_1^2-b_2^2\geq 0$, and assume
that $a, b_1, b_2\geq 0$. If $a\geq b_1+b_2$, then the homology class
can be represented by a symplectic surface. Indeed, take $b_1$ lines in $\CP
^2$ passing through a given point $P_1$, take $b_2$ lines passing
through another point $P_2$ ($P_1\neq P_2$), and take $a-b_1-b_2$ further
generic lines, all disjoint from $P_1$ and $P_2$. Blowing up $P_1$ and $P_2$
we get a complex curve, which (after smoothing
the double points in a symplectic way) provides a smooth symplectic
surface representing the homology class in question.
By the Symplectic Thom
Conjecture (Theorem~\ref{thm:SymThomConj})
this minimizes genus in its homology class. The genus of this curve
is easy to determine: for such a triple $g(a,b_1,b_2)$ is equal to 
\begin{equation}\label{eq:GenusFct}
\frac{1}{2}((a-1)(a-2) -b_1(b_1-2)-b_2(b_2-1)).
\end{equation}

The following idea, determining the genus function for all classes
with non-negative self-intersection, already appeared in \cite{Gao, Wall}:
Consider a class $\alpha =(a,b_1,b_2)$ and let $c$ denote
$a-b_1-b_2$. For $c\geq 0$ the genus of $\alpha$ has been described
above. Suppose now that $c<0$.  Consider the sphere representing the
homology class $(1,1,1)$ (which has self-intersection $(-1)$) we get
by connecting the projective lines in the projective plane factors,
and apply a diffeomorphism $f$ inducing  reflection to this sphere. The
induced map $f_*$ takes $(a,b_1,b_2)$ to $(a+2c, b_1+2c, b_2+2c)$. By
applying further diffeomorphisms we can assume that these integers are
non-negative, and (as $c<0$) it is easy to see that $\vert a+2c\vert
<a$. Therefore after finitely many steps of the above algorithm we get
a vector $\alpha '=(a', b_1', b_2')$ with
$c'=a'-b_1'-b_2'\geq 0$ and with $g(\alpha )=g(\alpha ')$.
For $\alpha '$ the formula of Equation~\eqref{eq:GenusFct} above
applies and determines the value of the genus function.

If $\alpha ^2=0$ (i.e.,
when $a^2-b_1^2-b_2^2=0$, that is, $(a,b_1,b_2)$ is a Pythagorean
triple), the assumption $a^2-b_1^2-b_2^2=0$ together
with $a-b_1-b_2\geq 0$ implies that one of $b_1$ or $b_2$ is equal to
zero, hence the above algorithm stops at a vector of the form
$(a,a,0)$ (or $(a,0,a)$). These classes can be represented by complex
curves which consist of $a$ copies of $\CP ^1$. By smoothly connect
summing them, we get
\begin{cor}
  If $\alpha \in H_2 ({\CP}^2\# 2 {\overline {\CP}}^2; \Z )$ has
  $\alpha ^2=0$ then $\alpha$ can be represented by a sphere
  (i.e. $g(\alpha )=0$).
  \end{cor}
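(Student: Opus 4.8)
The plan is to reduce $\alpha$ to a normal form using the genus-preserving diffeomorphisms already exploited above, and then to write down an explicit sphere in that normal form. Since each reflection used in the algorithm is induced by a self-diffeomorphism of ${\CP}^2\# 2 {\overline {\CP}}^2$, it preserves both the intersection form and the value of the genus function, so I may replace $\alpha$ by any class in its orbit. First I would arrange $\alpha = (a,b_1,b_2)$ with $a,b_1,b_2\geq 0$ and $b_1\geq b_2$, and then run the algorithm above: reflecting in the $(-1)$-sphere representing $(1,1,1)$ whenever $c=a-b_1-b_2<0$, I arrive after finitely many steps at a class $\alpha'=(a',b_1',b_2')$ with $c'=a'-b_1'-b_2'\geq 0$, for which $g(\alpha)=g(\alpha')$ and $(\alpha')^2=\alpha^2=0$.

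The next step is purely arithmetic. From $(a')^2=(b_1')^2+(b_2')^2$ together with $a'\geq b_1'+b_2'\geq 0$, squaring the inequality gives $(a')^2\geq (b_1')^2+2b_1'b_2'+(b_2')^2=(a')^2+2b_1'b_2'$, which forces $b_1'b_2'=0$. Hence one of the two blow-up coordinates vanishes, and the zero-square condition then pins down the normal form $\alpha'=(a',a',0)$ (up to interchanging the two ${\overline {\CP}}^2$ summands, itself a diffeomorphism). If $a'=0$ the class is trivial and bounds an embedded ball, so I may assume $a'\geq 1$.

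It remains to produce an embedded sphere in the class $(a',a',0)$. Reflecting the first blow-up coordinate (a diffeomorphism) shows $g((a',a',0))=g((a',-a',0))$, so it suffices to treat $(a',-a',0)=a'\cdot (1,-1,0)$. Here $(1,-1,0)$ is represented by the proper transform of a line through the first blown-up point $P_1$ --- an embedded $\CP^1$ of self-intersection $0$ --- and the proper transforms of $a'$ distinct lines through $P_1$ are pairwise disjoint, since two such lines meet only at $P_1$. Their union represents $(a',-a',0)$. Tubing these $a'$ spheres together along $a'-1$ mutually disjoint embedded arcs, arranged as a tree, yields a single connected embedded surface of genus $0$ in the class, and hence $g(\alpha)=g(\alpha')=0$.

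The argument is elementary, and the only point requiring genuine care is this final tubing. The hard part --- such as it is --- is to check that the $a'-1$ connecting tubes may be taken disjoint and embedded (which is automatic in a four-manifold once the spheres are disjoint, as generic connecting arcs are embedded and mutually disjoint) and that keeping the tube pattern a tree, rather than introducing a cycle, leaves the resulting surface of genus $0$ instead of raising its genus by a spurious handle. I expect no other obstacle.
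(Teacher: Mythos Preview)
Your proposal is correct and follows essentially the same route as the paper: reduce via diffeomorphisms to the normal form $(a',a',0)$, observe that this class is $a'$ times the class of the proper transform of a line through the first blown-up point, represent it by $a'$ disjoint copies of that $\CP^1$, and tube. Your discussion of the tubing (tree pattern, disjoint arcs) is more explicit than the paper's terse ``smoothly connect summing.''

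One small notational point: in the paper's convention (where the symplectic representative of $(a,b_1,b_2)$ is obtained by blowing up $b_i$ lines through $P_i$), the triple $(a,b_1,b_2)$ corresponds to $aH-b_1E_1-b_2E_2$, so $(a',a',0)$ is \emph{already} $a'(H-E_1)$, the multiple of the proper-transform class. Your extra reflection to $(a',-a',0)$ is therefore unnecessary, and your identification of $(1,-1,0)$ with the proper transform is off by this sign; but since that reflection is itself induced by a diffeomorphism, nothing in the argument is damaged.
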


{\bf{Classes with $\alpha ^2<0$}.}
For classes of negative self-intersection we do not have a complete
picture. There are a number of cases when one can find a sphere representing
the class, while in some cases we have a lower bound for the genus.
This latter case emerges mainly for characteristic elements.

A similar algorithm as before applies for classes of negative square
and (after applying diffeomorphisms) we need to examine
only those triples $(a, b_1, b_2)$ which satisfy $2a\geq b_1+b_2$.
Indeed, by  taking
the sphere of self-intersection 2 representing the class $(2,1,1)$,
we can apply reflection to a class with $d=2a-b_1-b_2$ negative (and
further diffeomorphisms to regain $a,b_1,b_2\geq 0$), and this process
will stop at a representative of the diffeomorphism orbit
satisfying $2a\geq b_1+b_2$. This information is, however, not sufficient
to determine the genus function on the homology class.
Indeed, in the above form (as $b_1\geq b_2$) we can assume that $a\geq b_2$, so
the second blow-up can be performed in a point of $\CP ^2$ where
$b_2$ lines pass through, but for the rest of the surface
representing the class $(a,b_1, b_2)$ we need to apply ad hoc methods, which
sometimes give better results for those representatives of the diffeomorphism
orbit
of the class which fail to satisfy $2a\geq b_1+b_2$.

There are simple examples of homology classes which can be represented
by an embedded sphere; for example:

\begin{lem}\label{lem:ExamplesOfSpheres}
  Consider the homology classes $(a,a+1,c)$ with 
  $c=1,2$.
  These classes can be represented by embedded spheres.
\end{lem}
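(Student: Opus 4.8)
The plan is to realize these classes by explicit \emph{spheres built from configurations of rational curves in the blow-up model}, using oriented resolutions of transverse double points. Write ${\CP}^2\#2\overline{\CP}^2$ as the blow-up of $\CP^2$ at two distinct points $P_1,P_2$, let $h$ be the line class and $e_1,e_2$ the exceptional classes, so that the triple $(a,b_1,b_2)$ stands for $a h - b_1 e_1 - b_2 e_2$. I record the following embedded spheres together with their pairwise intersection numbers: the exceptional spheres $E_1,E_2$ (classes $e_1,e_2$, self-intersection $-1$, disjoint from one another); the proper transforms $F$ of lines through $P_1$ (class $h-e_1$, self-intersection $0$, pairwise disjoint, each meeting $E_1$ in one point and disjoint from $E_2$); and the proper transform $A$ of the line $\overline{P_1P_2}$ (class $h-e_1-e_2$, self-intersection $-1$), which meets each of $E_1,E_2$ in one point and is disjoint from every generic fiber $F$. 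Write $\bar E_i$ for $E_i$ equipped with the reversed orientation, so that $[\bar E_i]=-e_i$.

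The tool I will use is the standard resolution principle: if finitely many oriented embedded surfaces meet transversally with no triple points, then smoothing every double point by the oriented resolution produces an embedded oriented surface representing the sum of the classes, and (by an Euler characteristic count) its genus equals $\sum_i g(\Sigma_i)+b_1(\Gamma)$, where $\Gamma$ is the dual graph with one vertex per surface and one edge per intersection point. In particular a configuration of spheres whose dual graph is a tree resolves to an embedded sphere. The key point is that the oriented resolution exists at every transverse double point irrespective of the sign of the local intersection, so I am free to reverse orientations on chosen components; reversing $E_i$ contributes $-e_i$, which is exactly what allows the second coordinate $a+1$ to exceed the first coordinate $a$.

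For $c=1$ I will use the identity $(a,a+1,1)=(-e_1)+(a-1)(h-e_1)+(h-e_1-e_2)$ and take the configuration consisting of $\bar E_1$, some $a-1$ distinct generic fibers $F$, and the curve $A$. By the intersection data above, $\bar E_1$ meets each of the $a-1$ fibers once and meets $A$ once, while the fibers and $A$ are mutually disjoint; hence the dual graph is a star, i.e.\ a tree, and its oriented resolution is an embedded sphere in the class $(a,a+1,1)$. For $c=2$ I will additionally attach $\bar E_2$ along its unique intersection point with $A$ (using $E_2\cdot A=1$), noting that $E_2$ is disjoint from $\bar E_1$ and from the generic fibers; the dual graph then gains a single pendant edge at the vertex $A$ and so remains a tree, whence its resolution is again an embedded sphere, now in the class $(a,a+1,2)$.

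The main point to get right --- and essentially the only place an error could enter --- is the \emph{acyclicity of the dual graph}: I must verify that the listed intersection points are genuinely distinct (no triple points) and that no unintended extra intersections occur, since one additional edge creating a cycle would raise the genus by one. This reduces to elementary facts about lines through $P_1,P_2$ and their proper transforms, which separate at the exceptional divisors. As written the construction uses $a-1\geq 0$ generic fibers, hence applies for $a\geq 1$; the degenerate small cases (such as $(0,1,1)$ and $(0,1,2)$) are handled directly --- for instance $(0,1,1)=(-e_1)+(-e_2)$ is represented by the ambient connected sum, along an embedded tube, of the two disjoint orientation-reversed exceptional spheres $\bar E_1,\bar E_2$.
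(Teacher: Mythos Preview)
Your proof is correct: the configurations you describe are transverse with tree-shaped dual graphs, and the oriented resolution (which, as you note, works at a smooth double point regardless of sign) produces an embedded sphere in the stated class. The intersection checks are all accurate, and the edge cases for small $a$ are easily covered as you indicate.

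The paper's proof is different and much shorter. It simply quotes the fact (from Ruberman's determination of $g_{\CP^2\#\overline{\CP}^2}$ recalled in the introduction) that $(a,a+1)\in H_2(\CP^2\#\overline{\CP}^2;\Z)$ is represented by a sphere, and then tubes this sphere to a sphere representing $c\cdot e_2$ in the second $\overline{\CP}^2$ summand (the exceptional line for $c=1$, a conic for $c=2$). Your argument is more explicit and more elementary: it does not appeal to Ruberman's result as a black box, but in effect reproves the relevant special case by hand via the star of fibers through $\bar E_1$, and then handles the second exceptional divisor by attaching $A$ and $\bar E_2$ rather than by passing to a separate connect summand. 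The paper's route is quicker given what is already on the table; yours is self-contained and makes the sphere visible as a plumbed configuration of rational curves.
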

\begin{proof}
   The class $(a,a+1)\in
  H_2(\CP ^2\# {\overline {\CP}}^2; \Z )$ can be represented by a
  sphere. Connect summing with the sphere representing $c=1,2$ we find
  the sphere representative of the class.
  \end{proof}

Lower bounds for the genus function for homology classes with
negative self-intersection are scarce; there are
some bounds for 
 characteristic homology classes, though, which we present below.
 Note that the class
$\alpha =(a,b_1,b_2) \in H_2( {\CP}^2\# 2 {\overline {\CP}}^2; \Z )$
is characteristic if and only if the coordinates $a,b_1, b_2$ are all odd.
Suppose now that the characteristic homology class
$\alpha$ can be represented by an embedded sphere.
Consider the $(\vert \alpha ^2\vert -1)$-fold antiblow-up
(i.e. connected sum with $\CP ^2$) of ${\CP}^2\# 2 {\overline {\CP}}^2$,
where the (still characteristic) homology
class $(a,b_1, b_2, 1, \ldots, 1)$ can be now represented
by an embedded sphere of self-intersection $(-1)$. We can blow that
sphere down, and as the class was characteristic, its complement (and hence
the result of the blow-down) is a spin four-manifold. Applying Furuta's
$\frac{10}{8}$-theorem \cite{Furuta} giving
$b_2(X)\geq \frac{5}{4}\vert \sigma (X)\vert +2$ and
the divisibility of $\sigma (X)$ by 16
to this spin four-manifold, we get

\begin{lem}
  Suppose that $\alpha =(a,b_1,b_2) \in H_2(
  {\CP}^2\# 2 {\overline {\CP}}^2; \Z )$
  is a characteristic homology element with negative self-intersection
  $\alpha ^2=-n$ ($n\in \N$).
  If $\alpha $ can be represented by an embedded sphere then $n$ is 
  $1$. \qed
  \end{lem}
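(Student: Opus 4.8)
The plan is to promote the hypothetical embedded sphere to an embedded $(-1)$-sphere representing a \emph{characteristic} class in a stabilized manifold, blow that sphere down to obtain a spin four-manifold, and then derive a contradiction from Furuta's theorem unless $n=1$. Recall that $\alpha=(a,b_1,b_2)$ being characteristic means that $a,b_1,b_2$ are all odd, and that $\alpha^2=a^2-b_1^2-b_2^2=-n$ with $n\geq 1$. Let $S\subset {\CP}^2\# 2 {\overline {\CP}}^2$ be the assumed embedded sphere with $[S]=\alpha$.

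First I would stabilize. Form the $(n-1)$-fold antiblow-up $M={\CP}^2\# 2 {\overline {\CP}}^2\# (n-1){\CP}^2$, and let $L_1,\dots,L_{n-1}$ be the projective lines in the new ${\CP}^2$-summands; each is an embedded $(+1)$-sphere lying in its own connected-sum region and is therefore disjoint from $S$. Performing an ambient connected sum of $S$ with the $L_i$ along tubes produces a single embedded sphere $S'\subset M$ representing $\alpha'=(a,b_1,b_2,1,\dots,1)$, of self-intersection $\alpha^2+(n-1)=-n+(n-1)=-1$. Since every coordinate is odd, $\alpha'$ is again characteristic, so $S'$ is a characteristic embedded $(-1)$-sphere; this is the key reduction.

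Next I would blow $S'$ down. As an embedded sphere of self-intersection $-1$ it is an exceptional sphere, so $M=Y\#{\overline {\CP}}^2$ with $[S']$ generating the $\overline{\CP}^2$-summand, and the blow-down $Y$ is again simply connected (since $\pi_1(M)=\pi_1(Y)$). The decisive point is that $Y$ is spin: because $Q_M=Q_Y\oplus\langle -1\rangle$ with $[S']$ spanning the $\langle -1\rangle$ factor, we have $H_2(Y;\Z)=[S']^{\perp}$, and for every $x\perp[S']$ the characteristic property of $[S']$ in $M$ yields $x\cdot x\equiv [S']\cdot x=0\pmod 2$. Hence $Q_Y$ is even, and a simply connected four-manifold with even form is spin. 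A bookkeeping of the construction gives $b_2^+(Y)=n$ and $b_2^-(Y)=1$, so $b_2(Y)=n+1$ and $\sigma(Y)=n-1$.

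Finally I would feed these numbers into Furuta's $\tfrac{10}{8}$-theorem. If $n\geq 2$ then $\sigma(Y)=n-1>0$, so $Y$ is spin with nontrivial indefinite form and the inequality applies: $n+1=b_2(Y)\geq \tfrac{5}{4}|\sigma(Y)|+2=\tfrac{5}{4}(n-1)+2$; clearing denominators gives $4n+4\geq 5n+3$, i.e. $n\leq 1$, a contradiction (one can also invoke Rokhlin's $16\mid\sigma(Y)=n-1$, after which only $n=1$ survives the bound). Hence $n=1$. I expect the genuine content to lie in the spinness step: it is precisely there that the hypothesis that $\alpha$ is characteristic is indispensable, since without it the orthogonal complement $[S']^{\perp}$ need not be even and the whole argument collapses. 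The stabilization and the final inequality are routine once that spinness is secured.
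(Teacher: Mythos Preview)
Your proposal is correct and follows essentially the same approach as the paper: stabilize by $(n-1)$ antiblow-ups to turn the characteristic sphere into a characteristic $(-1)$-sphere, blow it down to obtain a simply connected spin four-manifold, and then apply Furuta's $\tfrac{10}{8}$-inequality (the paper also mentions Rokhlin's $16\mid\sigma$, which you note parenthetically). Your write-up supplies a bit more detail on why the blow-down is spin, but the underlying argument is identical to the one the paper gives in the paragraph preceding the lemma.
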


A more general lower bound of the genus function on characteristic elements
can be derived as follows. Suppose that $\alpha $ is a characteristic element
in $H_2({\CP}^2\# 2 {\overline {\CP}}^2; \Z )$
with $\alpha ^2=-8n-1$. (Recall that the square of a characteristic element
is congruent to the signature of the ambient four-manifold mod 8.)
We claim that
\begin{prop}
  The smooth representative of a characteristic element $\alpha
  \in  H_2 ({\CP}^2\# 2 {\overline {\CP}}^2; \Z )$ with
  $\alpha ^2=-8n-1$ has genus at least $\frac{n}{2}$.
\end{prop}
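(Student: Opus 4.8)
The plan is to reduce the problem to the characteristic \emph{sphere} situation treated in the previous lemma, where Furuta's $\frac{10}{8}$-theorem together with the divisibility $16\mid\sigma$ of spin manifolds is available. Concretely, I would start with an embedded surface $\Sigma\subset {\CP}^2\# 2{\overline{\CP}}^2$ of genus $g$ with $[\Sigma]=\alpha$ and try to trade each of the $g$ handles for a connected summand, arriving at an embedded \emph{sphere} carrying a characteristic class, to which the antiblow-up/blow-down argument of the previous lemma applies.

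The genus reduction would proceed by surgery along simple closed curves on $\Sigma$. Since $X={\CP}^2\# 2{\overline{\CP}}^2$ is simply connected, any non-separating curve $\gamma\subset\Sigma$ is null-homotopic in $X$, so surgering $X$ along $\gamma$ with the framing induced by $\Sigma$ caps off the handle and lowers the genus by one; the effect on $X$ is to form a connected sum with a sphere bundle over $S^2$, that is, with $S^2\times S^2$ or with ${\CP}^2\#{\overline{\CP}}^2$, according to the parity of that framing. After $g$ such surgeries one obtains a sphere $S$ in a manifold $Z$ built from $X$ by $g$ of these stabilizations, with $b_2^+(Z)=1+g$, $b_2^-(Z)=2+g$ and $\sigma(Z)=-1$. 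When a twisted (odd) summand ${\CP}^2\#{\overline{\CP}}^2$ occurs, one tubes $S$ into the two generating spheres of that summand to restore the characteristic condition; tracking how this, and the caps in the twisted summands, shift the self-intersection of $S$ is the delicate point of the argument.

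With a characteristic sphere $S\subset Z$ in hand, I would run the argument of the previous lemma: connect sum with $|[S]^2|-1$ copies of ${\CP}^2$ and tube $S$ into the new lines to produce a characteristic sphere of self-intersection $(-1)$; blowing it down yields a closed, simply connected manifold $W'$ whose intersection form is the orthogonal complement of a characteristic $(-1)$-class, hence even, so $W'$ is spin. Computing $b_2(W')$ and $\sigma(W')$ from the data of $Z$ and feeding them into Furuta's inequality $b_2(W')\ge \frac{5}{4}|\sigma(W')|+2$, together with Rokhlin's $16\mid\sigma(W')$, should produce a linear lower bound on $g$, and carrying out the bookkeeping is intended to give $g\ge \frac{n}{2}$. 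The main obstacle is precisely this bookkeeping: one must control the framings of the genus-reducing surgeries (equivalently, the number of twisted summands) and the resulting self-intersection of $S$, so that the blown-down manifold $W'$ is genuinely spin. The Kervaire--Milnor congruence $[S]^2\equiv\sigma(Z)\pmod{16}$ for characteristic spheres is what forces the correct parities, and, after Furuta, appears to be responsible for the factor of $\tfrac12$ in the final estimate.
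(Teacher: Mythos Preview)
Your strategy is genuinely different from the paper's, and as you yourself flag, it is incomplete at the decisive step.

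The paper does \emph{not} reduce to a characteristic sphere. Instead it builds a companion surface of the \emph{same} genus $g$: in $\#_{8n+1}\CP^2$ (for $n$ even; $\#_{8n-7}\CP^2$ for $n$ odd) one tubes together the projective lines and then attaches $g$ trivial handles, obtaining a characteristic genus-$g$ surface $S$ of self-intersection $+(8n+1)$. One then takes the \emph{normal connected sum} (fiber sum) of $\CP^2\# 2\overline{\CP}^2$ and $\#\CP^2$ along the two genus-$g$ characteristic surfaces. A short argument with Arf invariants of spin structures on the boundary circle bundle shows the gluing can be chosen so that the resulting closed manifold $Z$ is spin, with $\sigma(Z)=8n$ and $b_2(Z)=8n+4g+2$. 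Furuta's inequality $b_2\ge\frac54|\sigma|+2$ then gives $g\ge n/2$ directly. No genus reduction, no framing bookkeeping, no tracking of twisted summands.

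Your surgery-to-a-sphere route is the Kervaire--Milnor/Freedman--Kirby style argument, and it can be made to work, but not at the level of detail you have given. The framings of your $g$ surgeries are not parameters you get to choose: they are forced by how the normal bundle of $\Sigma$ sits over each $\gamma$, and this is exactly what encodes the Rokhlin quadratic form and its Arf invariant. You allude to this (``Kervaire--Milnor congruence \ldots forces the correct parities''), but that congruence is a \emph{constraint} you must verify is compatible with your construction, not a tool you can invoke to arrange the parities you want. Until you actually compute how many twisted summands appear, how the tubings shift $[S]^2$, and confirm that the blown-down manifold is spin with the claimed $\sigma$ and $b_2$, you do not have a proof---you have a plan with its hardest step postponed. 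The paper's fiber-sum construction sidesteps all of this by never leaving genus $g$.
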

\begin{proof}
  As the characteristic element represents the Poincare dual of
the second Stiefel-Whitney class $w_2$ of the ambient manifold
${\CP}^2\# 2 {\overline {\CP}}^2$, 
the complement of the normal disk bundle
admits a spin structure.

Suppose now that the smooth representative of $\alpha $ is of genus
$g<\frac{n}{2}$. Consider the connected sum of $8n+1$ copies of
$\CP^2$ when $n$ is even, and $8n-7$ copies if $n$ is odd.  Below we
concentrate on the case of even $n$: choose in every summand the
projective line, connect sum them, and add $g$ fake handles, resulting
in a surface $S$ in $\#_{8n+1}\CP ^2$
of genus $g$ and self-intersection $8n+1$,
which represents a characteristic class (hence its complement is spin).

In constructing the normal connected sum of the two four-manifolds
along these surfaces we can choose the gluing map so that the result
is a spin manifold $Z$.  Indeed, the spin structures on a genus $g$
surface come in two types depending on their Arf invariants (see the
discussion in \cite[page~103]{kirby}), and two spin structures are
diffeomorphic iff they have equal Arf invariants. The spin three-manifold
$Y$ we get as the boundary of the complement of the open neighborhood of
the characteristic surface is a circle bundle over a surface (of odd
Euler number). Deleting a small tubular neighbourhood of a fiber,
we get a trivial bundle over a surface with boundary. On this
surface the exact
same classification of spin structures holds as for the closed case,
implying that on the circle bundle there are two orbits of spin structures
under the diffeomorphism group. The two structures can be given either as
above, or considering the surface in the $(8n-7)$-fold connected sum
of $\CP ^2$, but choosing the cubic curve in one of the components as opposed
to the projective line. As gluing with this latter spin structure we would
get a four-manifold violating Rokhlin's theorem about signatures of
closed spin manifolds, it follows that the two spin structures on
the boundaries in our construction are diffeomorphic, hence
(by choosing the right gluing map) provide a
spin structure on $Z$.

As $S$ has simply connected complement,
the resulting manifold will have $b_1(Z)=0$, hence its signature
$\sigma (Z)=8n$ and its second Betti number $b_2(Z)=\chi (Z)-2=8n+4g+2$
satisfies the $\frac{10}{8}$-theorem of Furuta \cite{Furuta}, implying
\[
b_2(Z)\geq \frac{5}{4}\sigma (Z) +2,
\]
eventually providing $g\geq \frac{n}{2}$. (An almost identical argument
gives the bound when $n$ is odd.)
\end{proof}

An upper bound for the genus can be given by the following simple
algorithm. First transform the given homology class to $(a, b_1 , b_2)$
with all nonnegative and $b_1\geq b_2$, and in addition $2a\geq b_1+b_2$.
In particular, we can assume that $a \geq b_2$.

As $a-b_2<b_1$ (which follows from $a^2-b_1^2-b_2^2<0$) and $a\geq
b_2$ for $\alpha =(a,b_1,b_2)$ with $\alpha ^2<0$ and $2a\geq
b_1+b_2$, we can consider $b_2$ lines passing through a given point
$P_1$ and the remaining $a-b_2$ through another point $P_2$.  Blowing
up the two points, we get a symplectic representative of the class
$(a, a-b_2, b_2)$. Addig $b_1+b_2-a$ copies of the first exceptional
divisor to it, we get the right homology class, while by resolving the
negative intersection points the result will not necessarily be
symplectic anymore. The genus of the resulting surface gives an upper
bound for the minimal genus $g(\alpha )$.

\subsection{The case of ${\CP}^2\# 3 {\overline {\CP}}^2$}
Most of the above arguments provide (after some minor modifications)
results for the genus function of
${\CP}^2\# 3 {\overline {\CP}}^2$.
As before, for a class $\alpha =(a, b_1, b_2, b_3)
\in  H_2 ({\CP}^2\# 3 {\overline {\CP}}^2; \Z )$ with
$\alpha ^2\geq 0$
there is a diffeomorphism under which the coordinates
of the image satisfy $a\geq b_1+b_2+b_3$ (and all non-negative).
In this step we use the $(-2)$-sphere representing the
homology class $(1,1,1,1)\in H_2 ({\CP}^2\# 3 {\overline {\CP}}^2; \Z )$.
Under the inequality $a\geq b_1+b_2+b_3$
the homology class can be represented by a symplectic
submanifold: as before, consider three distinct points
$P_1,P_2,P_3$ in $\CP ^2$, $b_i$ lines through $P_i$ (and
$a-b_1-b_2-b_3$ in generic position, avoiding all the $P_i$)
and blow up $P_i$. After symplectically
smoothing the double points of the resulting
complex curve, we get the desired symplectic submanifold.
This procedure again gives an algorithm, rather than a
formula to determine the genus function
$g_{{\CP}^2\# 3 {\overline {\CP}}^2}$.
Once again, if $\alpha =(a,b_1, b_2, b_3)$ satisfies $b_1\geq b_2\geq b_3$
(which can be assumed without loss of generality) and $\alpha  ^2=0$, then
after some diffeomorphism $\alpha =( a,  a, 0,0)$, and this class
can be represented by a smoothly embedded sphere.
Hence
\begin{cor}
  All classes $\alpha \in H_2 (H_2 ({\CP}^2\# 3 {\overline {\CP}}^2; \Z )$
  with $\alpha ^2=0$ have
  $g_{{\CP}^2\# 3 {\overline {\CP}}^2}(\alpha )=0$.
  \end{cor}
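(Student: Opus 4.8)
The plan is to reduce to the single normalized class $(a,a,0,0)$ and then build an explicit sphere, in direct analogy with the two-fold corollary proved above. First I would run the algorithm described in the preceding paragraphs on an arbitrary $\alpha$ with $\alpha^2=0$: applying sign reflections in the coordinate axes (each induced by a self-diffeomorphism, since the classes $(\pm 1,0,0,0)$ and so on are spheres) and reflections in the $(-2)$-sphere representing $(1,1,1,1)$, I arrive at a representative $(a,b_1,b_2,b_3)$ of the diffeomorphism orbit with all coordinates non-negative, $b_1\geq b_2\geq b_3\geq 0$, and $a\geq b_1+b_2+b_3$. Because each move is realized by a self-diffeomorphism of ${\CP}^2\# 3 {\overline {\CP}}^2$, the genus function is unchanged, so it suffices to show that the normalized class bounds a sphere.

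Next I would pin down the shape of the normalized class from the two constraints $a^2=b_1^2+b_2^2+b_3^2$ (this is $\alpha^2=0$) and $a\geq b_1+b_2+b_3$. Squaring the inequality and subtracting gives
\[
0 \leq a^2-(b_1+b_2+b_3)^2 = -2\,(b_1b_2+b_1b_3+b_2b_3),
\]
so $b_1b_2+b_1b_3+b_2b_3\leq 0$; as the $b_i$ are non-negative, every pairwise product vanishes, hence at most one $b_i$ is non-zero. With the ordering $b_1\geq b_2\geq b_3$ this forces $b_2=b_3=0$, and then $a^2=b_1^2$ together with non-negativity yields $a=b_1$. Thus the normalized class is exactly $(a,a,0,0)$.

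Finally I would represent $(a,a,0,0)$ by an embedded sphere. This class is supported on the first ${\overline {\CP}}^2$-summand: it is $a$ times the square-zero class $(1,1,0,0)$, which (after a sign reflection in its ${\overline {\CP}}^2$-coordinate) is the fiber class of the ruling on ${\CP}^2\# {\overline {\CP}}^2$, and so is represented by a sphere of self-intersection $0$. Since this sphere has trivial normal bundle it admits $a$ disjoint parallel push-offs, giving a disjoint union of $a$ spheres in the class $(a,a,0,0)$; tubing them together along $a-1$ unknotted tubes produces a connected representative. As tubing disjoint spheres realizes iterated connected sums of spheres, the resulting surface is again a sphere, so $g_{{\CP}^2\# 3 {\overline {\CP}}^2}(\alpha)=0$.

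I do not expect a genuine obstacle: the reduction algorithm for $\alpha^2\geq 0$ is already available, the algebraic step is a one-line expansion of the square, and the final construction is the same tubing trick used in the two-fold corollary. The only point deserving a line of care is that chaining $a$ disjoint parallel copies of a square-zero sphere introduces no genus; this follows from the Euler-characteristic count $\chi = 2a-2(a-1)=2$, which identifies the result as $S^2$.
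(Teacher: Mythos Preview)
Your proposal is correct and follows essentially the same route as the paper: reduce via the reflection algorithm to a normalized class with $a\geq b_1+b_2+b_3$ and all coordinates non-negative, observe that $\alpha^2=0$ then forces $(a,a,0,0)$, and represent this by tubing $a$ parallel copies of a square-zero $\CP^1$. You supply more detail than the paper does (the explicit expansion showing $b_1b_2+b_1b_3+b_2b_3\leq 0$ and the Euler-characteristic check for the tubing), but the strategy is identical.
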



As in the case of $\CP ^2 \# 2 {\overline {\CP}}^2$, we were not able
to determine the genus function on all elements with negative
self-intersection in $H_2 ({\CP}^2\# 3 {\overline {\CP}}^2; \Z )$.
The argument for finding a bound earlier for the genera of characteristic
elements applies in this case as well, providing some constraint, but
in general such results are not sufficient for getting exact values of
the genus function.

\section{The Akhmedov-Park exotic ${\CP}^2\# 2 {\overline {\CP}}^2$ manifold $AP$}
\label{sec:AP}
In \cite{AP} an infinite family of pairwise non-diffeomorphic smooth
four-manifolds has been constructed, each homeomorphic to ${\CP}^2\# 2
{\overline {\CP}}^2$.  One of these examples, which will be denoted by
$AP$, was shown to carry a symplectic structure; we recall the
definition of these manifolds below. The goal of this section is to
partially determine the genus function of these exotic manifolds.

The study of $AP$ (and indeed, all the exotic
copies discussed below) is different from ${\CP}^2\# 2 {\overline {\CP}}^2$
in two crucial aspects. First, we do not see the spheres which then
provide the diffeomorphisms inducing reflections in homology. (Indeed,
we suspect that $AP$ contains no homologically nontrivial sphere at
all, cf. Conjecture~\ref{conj:NoSphere}.)
The other aspect, however, provides more tools to study the
genus function: the proof of exoticness shows that the
symplectic form $[\omega ]$ and its first Chern class $c_1(AP, \omega )$
pair negatively, hence $AP$ has Kodaira dimension $\kappa =2$ (a
'symplectic four-manifold of general type'). This implies that
the adjunction inequality holds true for $AP$:
As this theorem holds for negative self-intersections as well, we find more
restrictions in $AP$ for classes of negative square.
In a similar manner, the further exotic smooth manifolds admit
nontrivial Seiberg-Witten invariants, hence the adjunction
inequality also provides bounds on the genera of homology classes (no matter
what square they have).

\subsection{The construction of the manifold $AP$}

The symplectic exotic example is given as a symplectic normal
connected sum of two symplectic manifolds along symplectic genus 2
surfaces, followed by a sequence of appropriately chosen Luttinger
surgeries.  Indeed, consider $S_0^3(T)$, the result of 0-surgery along
the trefoil knot $T\subset S^3$, and let $X_1=S^3_0(T)\times
S^1$. This is a symplectic four-manifold, as it admits a
$T^2$-fibration structure over $T^2$ (with a section) --- resting on the
fact that $T$ is a fibered knot of genus 1.
Let $t_1$ be an embedded torus representing the fiber and $t_2$
representing a section of this bundle. Indeed, the homology class
$2[t_1]$ also admits a torus representative (by braiding the
generic fiber), which we denote by
$t_1'$, and all of these tori can be chosen to be symplectic. As $t_1'$
and $t_2$ intersect in two points, by blowing up one of them and resolving
the
other, we get a genus 2 symplectic surface $\Sigma\subset X$
with self-intersection 0 in $X=(S^3_0(T)\times S^1) \# {\overline
  {\CP}}^2$.  Take the fiber connected sum of $(X, \Sigma )$ with
$(\Sigma _2\times T^2 , \Sigma _2\times \{p\} )$ for the genus 2
surface $\Sigma _2$ and for some point $p$ in the 2-torus $T^2$.  The
resulting manifold $Y$ will be a symplectic manifold. We can identify
a few interesting submanifolds in $Y$: for example, the symplectic
genus 2 surface $\Sigma $ along which the normal sum was taken
(after pushed off) provides a symplectic
genus 2 surface in $Y$ with self-intersection 0, which we will denote
by $A$. In a similar manner, two further surfaces $B$ and $C$ can be
constructed as follows: take a torus in $\Sigma _2\times T^2$
intersecting the genus 2 surface we will use in the symplectic normal
sum transversely in a single point, and glue to a push-off of either
$t_1$ or $t_2$. (Notice that $t_1$ intersects $\Sigma$ once, so after
the gluing we get a genus 2 surface $B$, while $t_2$ intersects
$\Sigma$ in two points, so in the construction we need to use two
copies of the torus in $\Sigma _2\times T^2$, and the resulting
surface $C$ will have genus 3.)  A fourth surface $D$ can be
constructed by taking the exceptional divisor of the blow-up in
$(S^3_0(T)\times S^1) \# {\overline {\CP}}^2$. This sphere is
punctured twice in the symplectic normal sum procedure, and gluing it to 
punctured tori on both sides we get a genus 2 surface $D$.
All the above surfaces can be constructed to be
symplectic.

The main result of Akhmedov and Park in \cite{AP} is that by applying
Luttinger surgeries in $Y$ we can get a simply connected four-manifold.
(As Luttinger surgery does not change signature and Euler characteristic,
the result will be necessarily homeomorphic to the two-fold blow-up
of $\CP ^2$.)

\begin{thm} (\cite{AP})
  There are four tori in $Y$ (originating from $\Sigma \times T^2$) along which
  Luttinger surgeries (with appropriately chosen coefficients)
  provide a simply connected four-manifold, and the 
 symplectic structure of $Y$ extends to the resulting
  four-manifold $AP$.
\end{thm}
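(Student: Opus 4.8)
The plan is to separate the statement into its two assertions: that the four-manifold $AP$ obtained from the four surgeries is symplectic, and that it is simply connected. The symplectic assertion requires almost no work. By the discussion of Luttinger surgery in Section~\ref{sec:homs2s2} (following \cite{ADK}), if the four tori are Lagrangian and we use surgery coefficients of the form $\frac{1}{k}$ relative to their Lagrangian framings, then each surgery yields a symplectic four-manifold whose symplectic form agrees with the previous one away from a neighbourhood of the surgery torus. Thus, once the four tori are exhibited as pairwise disjoint Lagrangian tori in the symplectic manifold $Y$ (which they are, being products of curves in $\Sigma_2$ with curves in $T^2$, pushed into the symplectic fiber-sum region), the symplectic structure of $Y$ extends over $AP$ automatically, and all of the content lies in arranging $\pi_1(AP)=1$.

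To control the fundamental group I would first compute $\pi_1(Y)$ by van Kampen's theorem applied to the fiber-sum decomposition $Y = (X\setminus \nu(\Sigma)) \cup_{\Sigma_2\times S^1} (\Sigma_2\times T^2 \setminus \nu(\Sigma_2\times\{p\}))$, the gluing taking place along the common boundary, a trivial circle bundle $\Sigma_2\times S^1$ over the genus-$2$ surface. On the product side, $\Sigma_2\times T^2 \setminus \nu(\Sigma_2\times\{p\}) \cong \Sigma_2\times (T^2\setminus D^2)$ has fundamental group $\pi_1(\Sigma_2)\times F$ with $F=\langle x,y\rangle$ free on two generators, the boundary circle being normally generated by the commutator $[x,y]$; here $a_1,b_1,a_2,b_2$ are the standard generators of $\pi_1(\Sigma_2)$ and $x,y$ come from the $T^2$-factor. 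On the other side one computes $\pi_1(X\setminus\nu(\Sigma))$ starting from $\pi_1(S^3_0(T)\times S^1)$ (the blow-up, being connected sum with a simply connected manifold, changes nothing), keeping track of how the torus-fiber and section loops of the trefoil fibration, together with the $S^1$-factor, meet the removed surface $\Sigma$. Van Kampen then identifies the two images of $\pi_1(\Sigma_2\times S^1)$, which is the step that ties the surface generators $a_i,b_i$ to the loops coming from the $S^3_0(T)\times S^1$ side.

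The four Luttinger surgeries are then chosen among the Lagrangian tori of product form (a standard curve in $\Sigma_2$) $\times$ (a standard curve in $T^2$), pushed into the fiber-sum region, with coefficients $\pm 1$ relative to the Lagrangian framings. Each such surgery, performed along its $\Sigma_2$-curve, adds to the presentation a relation of commutator type that sets that curve equal to a commutator built from the dual $T^2$-loop and the surgery meridian; I would record these four relations explicitly following the Luttinger-surgery conventions of \cite{ADK} and the parallel computation in \cite{BK}. The four tori and their coefficients are to be chosen so that, together with the relations already imposed by the fiber-sum gluing and by $\pi_1(S^3_0(T)\times S^1)$, the generators $a_1,b_1,a_2,b_2,x,y$ and the remaining trefoil/$S^1$ generators can be eliminated one after another; the presentation of $\pi_1(AP)$ then collapses to the trivial group.

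I expect this last cascade to be the main obstacle. The difficulty is twofold. First, one must pin down exactly how the fiber-sum gluing identifies the symplectic basis $a_1,b_1,a_2,b_2$ of $\pi_1(\Sigma_2)$ with the fiber, section and $S^1$ loops on the $(S^3_0(T)\times S^1)\#\overline{\CP}^2$ side, since it is this identification that lets the surgery relations interact with the nontrivial trefoil knot group. Second, one must verify that the chosen four tori and coefficients yield relations that genuinely trivialize $\pi_1$ rather than only its abelianization. That the abelianization comes out correctly --- namely $H_1(AP)=0$, consistent with $AP$ being homeomorphic to $\CP^2\#2\,\overline{\CP}^2$ --- is a necessary but routine consistency check; the substantive part is the word-level verification that the combined relations kill every generator, where any error in the choice of framings or coefficients would surface.
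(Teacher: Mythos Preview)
Your proposal sketches a reasonable outline of the argument one would expect to find in \cite{AP}: identify four disjoint Lagrangian tori of product type in the $\Sigma_2\times T^2$ piece, invoke \cite{ADK} for the symplectic extension, compute $\pi_1(Y)$ via van Kampen applied to the fiber-sum decomposition, and then verify that the four surgery relations (together with the gluing identifications and the trefoil-group relations) collapse the presentation to the trivial group. You are also honest about where the real work lies, namely in the explicit bookkeeping of generators and relations at the word level.

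However, you should be aware that the present paper does \emph{not} prove this theorem at all. The statement is attributed to \cite{AP}, and immediately after it the paper adds only the remark that ``In \cite{AP} the coefficients are carefully described, and the fundamental group calculation is given in details.'' In other words, the paper treats this as a black-box input from Akhmedov--Park and does not reproduce any of the $\pi_1$ computation. So there is no ``paper's own proof'' to compare against; your outline is already far more detailed than what the paper offers, and the comparison you were asked to make is vacuous here. If your goal is to supply a proof for this paper, what you have written is an appropriate plan, but the actual verification---the explicit choice of the four tori, the surgery coefficients, and the step-by-step elimination of generators---must be imported from \cite{AP} (or redone independently), and that is precisely the content the paper elects to cite rather than include.
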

\begin{rem}
  In \cite{AP} the coefficients are carefully described, and the fundamental
  group calculation is given in details.
\end{rem}

The Luttinger surgeries performed in $\Sigma _2\times T^2$ are disjoint
from the surfaces introduced above in $Y$, hence $A,B,C,D$ give rise to
surfaces (also denoted by the same letters) in $AP$. It is not hard to see that
if $c_1$ denotes the first Chern class of the symplectic
structure on $AP$, then
  
\begin{itemize}
\item $g(A) =g(B)=g(D)=2$, $g(C)=3$.
\item $c_1(A)=c_1(B)=2$, $c_1(C)=4$, $c_1(D)=3$.
\item $A\cdot A = B\cdot B = C\cdot C = 0$, $D\cdot D = -1$.  
\item $ B\cdot D = C\cdot D = 0$
\item $A\cdot B = 1$, $A\cdot C = 2$, $B\cdot C = 1$, $A\cdot D =2$.
\end{itemize}

Indeed, $H_2(AP; \Z )$ is generated by $B,C,D$, and we have a relation
$A = 2B +C -2D$. As these surfaces are symplectic,
Theorem~\ref{thm:SymThomConj} implies that the above
genera are indeed minimal. This observation then shows

\begin{thm}\cite{AP}\label{thm:APExotic}
  The four-manifold $AP$ is not diffeomorphic to 
  ${\CP}^2\# 2 {\overline {\CP}}^2$.
\end{thm}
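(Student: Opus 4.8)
The plan is to play the genus data of the symplectic surfaces in $AP$ off against the genus function of the standard manifold ${\CP}^2\# 2 {\overline {\CP}}^2$, exhibiting a single homology class whose minimal genus is incompatible across the two manifolds once their intersection forms are matched.

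First I would record that, since $A$, $B$, $C$, $D$ are symplectic surfaces in the symplectic manifold $AP$, the Symplectic Thom Conjecture (Theorem~\ref{thm:SymThomConj}) forces each of them to minimize genus in its homology class. In particular, reading off the table above, the class $[B]\in H_2(AP;\Z)$ satisfies $[B]^2 = B\cdot B = 0$ while $g_{AP}([B]) = g(B) = 2$; the same holds for $[A]$ and $[C]$. Thus $AP$ carries a homology class of square zero and strictly positive minimal genus. Note that $[B]$ is a genuinely nonzero class, since $B,C,D$ generate $H_2(AP;\Z)$.

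Next I would invoke the classification carried out earlier for the standard manifold: every class $\alpha\in H_2({\CP}^2\# 2 {\overline {\CP}}^2;\Z)$ with $\alpha^2 = 0$ is represented by a smoothly embedded sphere, hence has $g_{{\CP}^2\# 2 {\overline {\CP}}^2}(\alpha)=0$ (the corollary proved in Section~\ref{sec:CPk}). Finally I would derive the contradiction. Suppose $G\colon AP\to {\CP}^2\# 2 {\overline {\CP}}^2$ were a diffeomorphism. Then $G_\ast\colon H_2(AP;\Z)\to H_2({\CP}^2\# 2 {\overline {\CP}}^2;\Z)$ is an isomorphism preserving both the intersection form and the minimal genus function, so $(G_\ast[B])^2 = [B]^2 = 0$ and $g_{{\CP}^2\# 2 {\overline {\CP}}^2}(G_\ast[B]) = g_{AP}([B]) = 2$. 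But a square-zero class in ${\CP}^2\# 2 {\overline {\CP}}^2$ has genus $0$, a contradiction.

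In truth there is no single hard step here: all the difficulty has already been discharged. The content lies in two inputs — the Symplectic Thom Conjecture, which upgrades the obvious upper bound $g_{AP}([B])\le 2$ to the \emph{exact} value by guaranteeing minimality, and the earlier computation that square-zero classes in ${\CP}^2\# 2 {\overline {\CP}}^2$ bound spheres. The only point requiring care is that the surfaces $A,B,C,D$ genuinely survive the Luttinger surgeries as symplectic surfaces with the stated intersection data (as established in the construction), so that Theorem~\ref{thm:SymThomConj} applies; granting that, the comparison of genus functions is immediate.
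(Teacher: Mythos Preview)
Your proposal is correct and follows essentially the same line as the paper's own proof: exhibit a square-zero class in $AP$ (any of $[A],[B],[C]$) whose minimal genus is positive by the Symplectic Thom Conjecture, and contrast this with the corollary from Section~\ref{sec:CPk} that every square-zero class in ${\CP}^2\# 2\,\overline{\CP}^2$ is represented by a sphere. The paper's proof is simply a terser statement of the same argument.
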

\begin{proof}
  The genus function is a diffeomorphism invariant, and it vanishes on
  the subset of $H_2({\CP}^2\# 2 {\overline {\CP}}^2; \Z )$ consisting
  of elements of zero square, while in $AP$ there are symplectic
  submanifolds of square zero and nonzero genus (any of $A,B,C$ above
  will do).
\end{proof}

Indeed, we can determine  the function $g_{AP}$ for a large class of
elements. Let $\alpha  = t_A\cdot A +t_B\cdot B +t_C \cdot C$, where
all $t_i \geq 0$ (and at least one of them is non-zero).
This class can be represented by a symplectic surface,
and its genus can be determined by a simple (although not very
enlightening) formula. Indeed, the multiple of a class can be represented
by a cover of the 0-section in its (trivial) normal bundle, which can
be chosen to be symplectic. 
The sum of the terms then can be represented by the union of these
potential covers, and the positive intersections can be symplectically
resolved.

The lower bound provided by Equation~\eqref{eq:adjunction}
of the adjunction inequality 
cannot provide sharp results in general --- indeed, when the
self-intersection of a homology class is negative, then for
sufficiently large multiples of the class the inequality of
Equation~\eqref{eq:adjunction} does not provide any information.
For this reason, at the moment we do not have effective tools to
estimate the genus function for homology classes in general, and
this implies that $g_X$ cannot be determined in general.

Notice that the generic homology class in $H_2({\CP}^2\# 2 {\overline
  {\CP}}^2; \Z )$ can be given as $t_BB+t_CC+t_DD$, but since $D$ has
self-intersection $-1$, even if all coefficients are non-negative, the
direct argument given above does not necessarily provide a symplectic
submanifold.  The same phenomenon can happen if the symplectic surface
representing a given homology class happens to be disconnected: in
this case smoothly we can tube the components together, but this
cannot necessarily be done symplectically.  In the following we show
some cases where a better understanding of the construction of $AP$
and a surface representing a particular homology class provides
results close to optimal (and sometimes even optimal).

{\bf Some examples:}
\begin{itemize}
\item Consider $\alpha =B+C+D$. As $B,C,D$ all can be represented
  by a symplectic surface, we have a disconnected representative of $\alpha$
  (with components of genus 5 and 2); tubing them we get a surface
  of genus 7 representing $\alpha$. As $c_1(\alpha )=9$,
  the adjunction formula of Theorem~\ref{thm:AdjIneq}
  for any smooth
  representative gives $g\geq 6$ as a lower bound.
 \item For $\alpha =B+C-D$ we have $c_1(\alpha )=3$ and $\alpha ^2=1$,
  so adjunction implies $g\geq 3$. A surface representing this class
  can be given by taking the disconnected symplectic representative
  of $B+C+D$, reverse the orientation on the component corresponding
  to $D$ and then tube. The result will be a surface of genus 7.
  A better result can be achieved by the following simplification:
  when we construct the individual surfaces $B$ and $D$, in the
  $\Sigma _2\times T^2$-part we consider two punctured genus-1
  surfaces (once with coefficient 1 in $B$ and once with coefficient $-1$ in
  $D$). Substituting these subsurfaces with an annulus we get a genus 5
  representative of the class $\alpha$.

\item Let $\alpha = B+2C+D$; then $\alpha ^2 = 3$ and $c_1(\alpha )=
  13$. The lower bound for the genus provided by the adjunction
  formula is 9. A genus 10 surface can be easily constructed by taking
  the disconnected symplectic representative (with a genus-8 and a
  genus 2 components) and tube them.
\item For the class $\alpha =2B+2C-2D$ we have
  $\alpha ^2=4$ and $c_1(\alpha )=6$, so the adjunction
  inequality provides 6 as the minimal genus.
  Noticing that $A=2B+C-2D$ is a symplectic surface of genus 2 intersecting
  $C$ in two points, after resolving the intersection points, we find
  a genus 6 symplectic surface representing the homology class at hand.
\item Let us take $\alpha = B+D$. In this case $\alpha ^2=-1$ and
  $c_1(\alpha ) =5$, so the adjunction inequality provides 3 as a lower bound.
  The disconnected representative (after tubing) provides a genus 4 surface.
  Now if we use (instead of the two punctured tori,  each appearing
  in $B$ and $D$ on the $\Sigma _2\times T^2$-side) a twice-braided torus, which
  now intersects the normal connect sum surface, we get a representative of
  genus 3.
  

\end{itemize}

After an extensive search we did not find a nontrivial homology class in
$H_2(AP; \Z )$ which we could represent by a sphere. For classes of
self-intersection at least $-1$ there is no such class, as can be
deduced from Seiberg-Witten theory, but for more negative squares this
is far from obvious.  (Compare with the examples of
Lemma~\ref{lem:ExamplesOfSpheres} for $\CP ^2\# 2{\overline
  {\CP}}^2$.) Indeed, the search for embedded tori (representing
non-trivial homology classes) did not provide any result either.
This led us to the following
\begin{conj}\label{conj:NoSphere}
  The four-manifold $AP$ does not contain homologically essential embedded
  spheres or tori.
  \end{conj}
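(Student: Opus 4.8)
The plan is to organize the proof around the self-intersection number of a hypothetical homologically essential embedded sphere $S$ (resp.\ torus $T$), ruling out the large and mildly negative squares by gauge theory and isolating the genuinely difficult regime. The key structural input is that, as established above, $AP$ is a symplectic four-manifold of Kodaira dimension $\kappa=2$; hence it has $b_2^+(AP)=1$, $b_2^-(AP)=2\le 9$ and is of Seiberg--Witten simple type, so Theorem~\ref{thm:AdjIneq} applies to $AP$ with $\pm c_1(AP,\omega)$ among its basic classes. Writing $K=-c_1(AP,\omega)$ for the canonical class we have $K^2=c_1^2=2\chi(AP)+3\sigma(AP)=7>0$, and the intersection lattice $H_2(AP;\Z)$ has signature $(1,2)$.

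First I would dispose of the nonnegative squares. For an essential sphere $S$ with $S^2\ge 0$ I form an embedded torus representing $2[S]$ by joining two disjoint parallel copies of $S$ with two tubes; its square is $4S^2$, so Theorem~\ref{thm:AdjIneq} forces $2\,|\langle c_1,[S]\rangle|+4S^2\le 0$, whence $S^2=0$ and $\langle c_1,[S]\rangle=0$. Because $K^2>0$, the light-cone lemma in the Lorentzian lattice $H_2(AP;\Z)$ gives $\langle c_1,v\rangle\ne 0$ for every nonzero class $v$ of square $0$, a contradiction. The same input kills essential tori of square $\ge 0$: adjunction gives $|\langle c_1,[T]\rangle|\le -T^2\le 0$, forcing $T^2=0$ and $\langle c_1,[T]\rangle=0$, again impossible. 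The square $-1$ classes are handled by Seiberg--Witten theory as indicated in the text: an essential $(-1)$-sphere would exhibit $AP$ as non-minimal, with minimal model a symplectic manifold of general type whose blow-up formula for the basic classes is incompatible with the computed pairings of $c_1$.

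The remaining regime consists of spheres with $S^2\le -2$ and tori with $T^2\le -2$. For the \emph{characteristic} classes in this range I would run the antiblow-up argument of the previous section: connect-summing $AP$ with $(|\alpha^2|-1)$ copies of $\CP^2$ and tubing $S$ to the corresponding projective lines produces a characteristic embedded $(-1)$-sphere, which can be blown down to yield a spin four-manifold; Furuta's $\frac{10}{8}$-theorem \cite{Furuta} then gives a contradiction exactly as in the earlier lemma ruling out characteristic sphere classes of square $\le -2$. For characteristic tori the companion genus estimate (genus $\ge \frac{n}{2}$ for square $-8n-1$) gives only partial information, since $g=1$ merely bounds how negative such a square can be. The crucial point is that this entire line of attack is available only because a \emph{spin} structure appears after blow-down.

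The main obstacle, therefore, is the non-characteristic classes of very negative square. Here Theorem~\ref{thm:AdjIneq} is vacuous --- for fixed small genus the bound $|\langle c_1(\s),\alpha\rangle|+\alpha^2\le 2g-2$ is automatically satisfied once $-\alpha^2$ is large --- and blowing up $AP$ (even at points of $S$) gives no leverage, because the adjunction defect $|\langle c_1,\alpha\rangle|+\alpha^2$ is a blow-up invariant; moreover no spin structure is present, so Furuta does not apply. I would attempt to close this gap through the Floer-theoretic data of the boundary: a tubular neighborhood of $S$ (resp.\ $T$) has boundary a circle bundle of Euler number equal to the square, and the complement is a smooth filling whose existence should constrain the pair $(g,\alpha^2)$ relative to the basic classes of $AP$ via the $d$-invariants or the refined minimal-genus bounds of Heegaard Floer homology. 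An alternative is to replace the numerical Seiberg--Witten invariant by the Bauer--Furuta stable homotopy refinement, whose $KO$-theoretic content is precisely what can detect surfaces in the range where the numerical adjunction inequality fails. I expect that extracting from either framework a bound that is \emph{independent of the size of} $-\alpha^2$ is exactly the crux, which is why the statement is recorded here as a conjecture rather than a theorem.
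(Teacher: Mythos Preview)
The statement is a \emph{conjecture} in the paper, not a theorem; the paper offers no proof. The only justification given there is the remark preceding the conjecture that Seiberg--Witten theory rules out essential spheres of self-intersection at least $-1$, together with the report that an extensive search produced no spheres or tori. You clearly recognize this yourself in your final sentence, so your write-up is better read as an organized discussion of what is known and where the obstruction lies than as a claimed proof.

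On those terms your outline is largely sound and in fact goes further than the paper. The light-cone step---$K^2=7>0$ in a lattice of signature $(1,2)$ forces $K\cdot v\neq 0$ for every nonzero $v$ with $v^2=0$---cleanly eliminates essential tori of nonnegative square, which the paper does not explicitly do. The tubing trick to pass from a square-$\geq 0$ sphere to a torus so that Theorem~\ref{thm:AdjIneq} applies is also correct, as is transplanting the Furuta antiblow-up argument of Section~\ref{sec:CPk} to characteristic sphere classes in $AP$ (the input is purely homological and $AP$ has the same intersection form as $\CP^2\#2\overline{\CP}^2$).

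One slip: you assert that the remaining regime is spheres and tori with square $\leq -2$, but you have not actually excluded tori with $T^2=-1$. Adjunction only yields $|K\cdot[T]|\leq 1$ there, and such classes certainly exist in the lattice (in the diagonal basis with $K=(3,1,1)$, take $v=(0,1,0)$). This case should be grouped with the unresolved negative-square regime rather than declared handled. Your identification of the genuine crux---non-characteristic classes of very negative square, where the adjunction inequality is vacuous and no spin structure is available for Furuta-type bounds---is exactly why the authors record this as a conjecture.
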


%
%

As it happened in the case of
the homology $S^2\times S^2$ examples, the last Luttinger surgery in
the construction can be replaced by a torus surgery with coefficient
$n$, providing an infinite family $Z_n$ of four-manifolds, distinguished
by the value of the Seiberg-Witten function on the two basic classes.
(See the computation in Proposition~\ref{prop:SWcalc}; above
$Z_1=AP$.)
As the above constructions all happened in the complement of
all surgeries, the constructions are still valid, and whenever
the minimality is verified by the adjunction inequality, the statement holds
true for the entire family. (Notice that these manifolds --- with the exception
of $AP$ --- are not symplectic.)
In conclusion, whenever we were able to identify the value of the
genus function we get $g_{Z_n}(\alpha )=g_{AP}(\alpha )$, leading us to
\begin{conj}
  The genus functions of the exotic four-manifolds $Z_n$ are all
  equivalent to $g_{AP}$.
\end{conj}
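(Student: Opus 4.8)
The natural candidate for the isomorphism $f$ in the definition of equivalent minimal genus functions is the identity on $H_2$, under the natural identification of $H_2(Z_n;\Z)$ with $H_2(AP;\Z)$ induced by the common complement of the surgery tori. Since torus surgery changes neither $b_2^{\pm}$ nor the intersection form on the surviving classes, this $f$ automatically preserves $Q$, so the entire content of the conjecture lies in proving $g_{Z_n}(\alpha)=g_{AP}(\alpha)$ for every $\alpha$. First I would separate the two standard estimates attached to a class $\alpha$: let $U(\alpha)$ be the genus of the explicit representative produced by the constructions of Section~\ref{sec:AP} (tubing symplectic pieces, substituting annuli for punctured tori, resolving positive intersections, and so on), and let $L(\alpha)$ be the best lower bound coming from the adjunction inequality of Theorem~\ref{thm:AdjIneq}.

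The key observation is that both $U$ and $L$ are \emph{independent of $n$}. Every surface used to bound $U(\alpha)$ from above lives in the complement of all the surgery tori, which is literally the same piece of $Z_n$ for all $n$ and for $AP=Z_1$; hence the same surface sits in every member of the family with the same genus, giving one common upper bound. On the other side, Proposition~\ref{prop:SWcalc} shows that $Z_n$ and $AP$ carry the \emph{same} two Seiberg--Witten basic classes $\pm\s$ as spin$^c$ structures, with the same $c_1$; only the integers $SW(\pm\s)$ differ. Since Equation~\eqref{eq:adjunction} depends on $c_1(\s)$ and not on the numerical value $SW_X(\s)$, the adjunction lower bound $L(\alpha)$ is identical for every $Z_n$ and for $AP$. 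Sandwiching gives $L(\alpha)\le g_{Z_n}(\alpha)\le U(\alpha)$ and $L(\alpha)\le g_{AP}(\alpha)\le U(\alpha)$ with the same $L,U$, so on every class for which the adjunction bound is sharp, i.e.\ $L(\alpha)=U(\alpha)$, we get $g_{Z_n}(\alpha)=g_{AP}(\alpha)$ immediately. This already settles the conjecture for all adjunction-sharp classes, among them the symplectically represented classes (the nonnegative combinations of $A,B,C$, where adjunction is an equality) and several negative-square examples treated above such as $B+D$ and $2B+2C-2D$.

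To reach the remaining classes, those where $L(\alpha)<U(\alpha)$ and the genus is not pinned down by adjunction, the plan is to transport an honest minimal-genus surface directly across the surgery. Given a genus-minimizing $\Sigma\subset Z_n$ representing $\alpha$, I would isotope it to be transverse to the surgery torus $T$ and then try to remove its intersections with the glued-in $D^2\times T^2$, pushing $\Sigma$ entirely into the common complement; once there it also represents $\alpha$ in $AP$ with no larger genus, and the symmetric argument yields the reverse inequality, giving $g_{Z_n}(\alpha)=g_{AP}(\alpha)$ in general.

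The hard part, and the reason this is recorded only as a conjecture, is precisely this transport step. A minimal representative need not be geometrically disjoint from $T$, and there is no general surgery move that clears $\Sigma$ off $\nu(T)$ without paying in genus: the compressing disks or tubes one would use inside $D^2\times T^2$ can raise the genus, and no current tool controls this. This is the four-dimensional shadow of the well-known slackness of the adjunction inequality for classes of negative self-intersection and their large multiples, where Equation~\eqref{eq:adjunction} becomes vacuous. Absent a genus-respecting way to move surfaces across torus surgery, the equality $g_{Z_n}=g_{AP}$ on the adjunction-slack classes remains out of reach, which is why we state it as a conjecture supported by the matching of all computable values above.
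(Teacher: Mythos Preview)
This statement is a \emph{conjecture} in the paper, not a theorem; the paper offers no proof, only the supporting evidence that ``whenever we were able to identify the value of the genus function we get $g_{Z_n}(\alpha )=g_{AP}(\alpha )$'' together with the remark that the adjunction inequality is insensitive to the Seiberg--Witten \emph{value}. Your write-up captures exactly this: the upper bounds $U(\alpha)$ are $n$-independent because the constructed surfaces lie in the common complement of the surgery tori, and the lower bounds $L(\alpha)$ are $n$-independent because the basic classes (and hence $c_1(\s)$) coincide across the family while only the numerical values $SW_{Z_n}(\pm\s)=\pm n$ vary. That is precisely the paper's reasoning, so on the level of evidence you and the paper agree.

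Where you go further is in sketching a ``transport'' strategy---isotope a minimal-genus representative off the surgery torus and carry it between the $Z_n$---and you correctly flag that this is the missing step. The paper does not propose any such mechanism; it simply records the conjecture. Your discussion is therefore not a proof (nor does the paper have one), but an honest articulation of why the conjecture is plausible and where the obstruction lies. One small caution: you invoke Proposition~\ref{prop:SWcalc}, which is literally stated for the $X_n$ family; the paper does point to that computation as the model for $Z_n$, but the analogous statement for $Z_n$ (two basic classes with values $\pm n$, and the same for $-Z_n$) is asserted rather than reproved, so if you were writing this up carefully you would need to redo that surgery-formula argument in the $AP$ setting.
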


\begin{rem}
  The Seiberg-Witten functions $SW_{Z_n}\colon {\rm Spin}^c(Z_n)\to \Z$
  all have the same support (the two basic classes of $Z_n$), but
  different values on their support. As the adjunction formula (and
  any genus-related formula) is insensitive for the Seiberg-Witten
  {\bf{value}}, there is reason to believe that the genus functions
  for $Z_n$ are all the same. At the same time, this observation raises
  the question how the Seiberg-Witten value influences the geometry
  of the underlying four-manifold.
\end{rem}

Conjecture~\ref{conj:NoSphere} naturally
extends to the four-manifolds $Z_n$: we expect that these
four-manifolds do not contain homologically essential embedded
spheres or tori.

\section{The Baldridge-Kirk exotic ${\CP}^2\# 3 {\overline {\CP}}^2$ manifold $BK$}
\label{sec:BK}
A similar construction as in the previous section provides exotic
structures on ${\CP}^2\# 3 {\overline {\CP}}^2$, as it was
demonstrated in \cite{BK} by Baldridge and Kirk. Indeed, the building
blocks are very similar to the ones used earlier: consider the
four-manifold $S^3_0(T)\times S^1$ for the trefoil knot $T$, which
admits a $T^2$-fibration over $T^2$ with a section. Resolve the union
of a section and a fiber and blow up the resulting genus 2 surface
twice, getting a genus 2 surface $\Sigma $ with trivial normal bundle
in $(S^3_0(T)\times S^1)\# 2 {\overline {\CP}}^2$.  Consider the
symplectic normal sum of the resulting symplectic surface in the
symplectic four-manifold with $\Sigma _2\times T^2$ along $\Sigma
_2\times \{ p\}$ for some $p\in T^2$. 
\begin{thm} (\cite{BK})
  There are four tori in the above normal connected sum (originating
  from $\Sigma \times T^2$) along which Luttinger surgeries (with
  appropriately chosen coefficients) provide a simply connected,
  symplectic four-manifold, which we call $BK$.
\end{thm}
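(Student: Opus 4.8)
The plan is to verify the two assertions of the theorem separately — that $BK$ carries a symplectic structure and that it is simply connected — following closely the template of the $AP$ construction in Section~\ref{sec:AP}, since the two building blocks differ only in the number of blow-ups.

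For the symplectic structure I would argue as follows. The genus $2$ surface $\Sigma\subset (S^3_0(T)\times S^1)\# 2{\overline {\CP}}^2$, obtained by resolving the union of a fiber and a section and blowing up twice, is symplectic with trivial normal bundle, and $\Sigma _2\times \{p\}\subset \Sigma _2\times T^2$ is symplectic with square $0$. Gompf's symplectic normal connected sum (see \cite{GS}) therefore equips $Y$ with a symplectic form that agrees with the original ones away from the gluing region. The four surgery tori are chosen of product type $\gamma \times \delta$, with $\gamma$ a simple closed curve on $\Sigma _2$ and $\delta$ one of the two generating circles of the $T^2$ factor; such tori are Lagrangian for the product symplectic form on $\Sigma _2\times T^2$, and they can be placed in the complement of $\Sigma _2\times \{p\}$, so they persist as Lagrangian tori in $Y$. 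As recalled in Section~\ref{sec:homs2s2}, a $\tfrac1k$ surgery along a Lagrangian torus (with its Lagrangian framing) is a Luttinger surgery and preserves the symplectic structure \cite{ADK, Lut}; hence $BK$ is symplectic for every admissible choice of coefficients.

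The simple connectivity is the substantive part, and it is a Seifert--van Kampen computation. I would first present $\pi_1(Y)$ by cutting $Y$ along the separating three-manifold $\Sigma _2\times S^1$ (the unit normal bundle of the square-zero gluing surface) into the complement of $\nu(\Sigma)$ in $(S^3_0(T)\times S^1)\# 2{\overline {\CP}}^2$ and the complement $\Sigma _2\times (T^2\setminus D^2)$ of $\Sigma _2\times \{p\}$. On the product side $\pi_1=\pi_1(\Sigma _2)\times \langle x,y\rangle$ with the two $T^2$-generators $x,y$ free, and the gluing torus contributes $\pi_1(\Sigma _2)\times \langle [x,y]\rangle$; in particular the meridian of the gluing surface is identified with the commutator $[x,y]$. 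On the trefoil side I would use that $S^3_0(T)\times S^1$ is a $T^2$-bundle over $T^2$ to write down $\pi_1$ of the complement and track the images of the fiber and section classes and of the normal meridian. The amalgamation over $\pi_1(\Sigma _2\times S^1)$ then yields an explicit finite presentation of $\pi_1(Y)$.

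Finally I would feed in the four Luttinger relations. Surgery along $\gamma \times \delta$ (with the push-off of $\gamma$ as the surgery curve and coefficient $\pm 1$) imposes that a conjugate of $\delta$ equals a commutator built from $\gamma$ and the normal meridian, which in effect kills $\delta$ in $\pi_1$; choosing the four products so that $x$ and $y$ are eliminated, while the remaining relations force the surface generators $a_1,b_1,a_2,b_2$ and the trefoil-side generators to become trivial, collapses the amalgamated group to the trivial group. I expect the main obstacle to be precisely this bookkeeping: one must check that after eliminating the $T^2$-directions the commutator $[x,y]$ — that is, the meridian of the gluing surface — also dies, and that the surviving relations coming from the $T^2$-bundle structure on the trefoil side are consistent with triviality rather than leaving a residual cyclic or perfect quotient. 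This is exactly the delicate calculation carried out in \cite{BK}, whose explicit coefficients and generator chase I would reproduce with the curves $\gamma \times \delta$ specified above.
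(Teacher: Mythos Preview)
The paper gives no proof of this theorem: it is stated with the attribution \cite{BK} and followed immediately by the discussion of symplectic submanifolds in $BK$, just as the analogous theorem for $AP$ in Section~\ref{sec:AP} is stated without proof and accompanied only by the remark that the coefficients and fundamental group calculation are given in detail in \cite{AP}. So there is nothing in the paper to compare your proposal against; your sketch is in fact an outline of the argument that \cite{BK} carries out, and which this paper deliberately omits.

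As a sketch of that argument your plan is sound, but one detail is reversed. When you perform $\tfrac{1}{k}$ Luttinger surgery on $T_{\gamma,\delta}=\gamma\times\delta$ with the push-off of $\gamma$ as the surgery curve, the relation introduced is of the form $\mu^{k}\,\overline{\gamma}=1$ (with $\mu$ the meridian of the torus), so it is $\gamma$, not $\delta$, that becomes a commutator and is effectively killed. Your sentence has the roles of $\gamma$ and $\delta$ swapped. This matters for the bookkeeping you flag at the end: with only four surgeries you cannot directly kill all of $a_1,b_1,a_2,b_2,x,y$, so the choice of which curve on each torus is the surgery curve is exactly what makes the remaining relations (coming from the trefoil side and from the commutator $[x,y]$ being the meridian) collapse the rest of the group. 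Getting that assignment right is the content of the computation in \cite{BK}.
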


Using the pieces, and considering surfaces before the Luttinger
surgeries, one can find several symplectic submanifolds in $BK$.
First, there is the genus 2 surface used in the symplectic normal sum,
which provides a symplectic submanifold with self-intersection 0; the
corresponding homology class will be denoted by $A$. There is a
punctured symplectic torus in $\Sigma _2 \times (T^2\setminus \{p\})$,
and as it can be chosen to be disjoint from the Lagrangian tori of the
Luttinger surgeries, it descends to $BK$. From the other side, the
section and the fiber of the $T^2$-fibration on $S_0^3(T)\times S^1$
give punctured tori in the complement of the genus 2 surface; when
gluing them with the punctured torus in $\Sigma _2\times (T^2\setminus
\{p\})$, we get two surfaces $B$ and $C$ of genus 2, self-intersection
0, intersecting each other and $A$ once.  The exceptional divisors of
the blow-ups on $S_0^3(T)\times S^1$ provide two punctured disks with
relative framing $(-1)$, hence after gluing them to the punctured tori
in $\Sigma _2\times (T^2\setminus \{p\})$ we get two disjoint tori of
self-intersection $(-1)$, and call them $D_1$ and $D_2$.

A basis of $H_2(BK; \Z )\cong \Z ^4$ can be given by $B,C,D_1, D_2$;
the genera and intersection patterns are described above, and the first
Chern class $c_1$ of the symplectic four-manifold $BK$ evaluates as
$c_1(B)=c_1(C)=2, c_1(D_1)=c_1(D_2)=1$, while $c_1(A)$ is also 2.

Obviously, the genus function $g_{BK}$ is nonzero on the subset of
$H_2(BK; \Z )$ formed by elements of self-intersection 0 (as
$g_{BK}$ takes the value 2 on $A,B,C$), verifying again that the
diffeomorphism invariant $g_X$ distinguishes
${\CP}^2\# 3 {\overline   {\CP}}^2$
from $BK$, the straightforward extension of
Theorem~\ref{thm:APExotic} for the manifold $BK$. (Notice that in the
partial determination of $g_{BK}$ we rely on the
Symplectic Thom Conjecture Theorem~\ref{thm:SymThomConj}.)

Once again, homology classes of the form $t_A\cdot A + t_B \cdot
B+t_C\cdot C$ with $t_A, t_B, t_C\geq 0$ can be represented by
symplectic submanifolds, and hence the genus function on those classes
can be algorithmically determined, while in general the adjunction
inequality of Theorem~\ref{thm:AdjIneq} provides a lower bound on the
genus (in terms of the value of $c_1$ on the homology class and its
self-intersection), while specific constructions give upper bounds on
$g_{BK}$. The two bounds, however, do not meet in general.

As before, the last Luttinger surgery in the construction of $BK$ can
be replaced by an appropriate sequence of torus surgeries, providing a
sequence of four-manifolds $V_n$ (with $V_1=BK$), which then show the
existence of infinitely many distinct smooth structures on ${\CP}^2\#
3 {\overline {\CP}}^2$ (see \cite{BK}). As in the previous section,
these manifolds are distinguished by the Seiberg-Witten values on the
basic classes.  We cannot compare their genus functions, as the values
of these functions on most homology classes of negative
self-intersection are unknown at the moment.  
We do expect, however
\begin{conj}
  The exotic four-manifolds $V_n$ have equivalent genus functions.
\end{conj}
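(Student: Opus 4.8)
The plan is to show that, under the natural identification of second homology, both the lower and the upper bounds on the genus function that we actually control are insensitive to the surgery coefficient $n$; the obstruction to upgrading this observation into a proof is precisely the gap between these two bounds that we have already seen for classes of negative square.

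First I would record the relevant Seiberg--Witten data. Exactly as in Proposition~\ref{prop:SWcalc}, the surgery formula~\eqref{eq:formula} applied along the last surgery torus shows that the \emph{support} of $SW_{V_n}$ is independent of $n$: the basic classes are $\pm c_1(\s)$ for a fixed spin$^c$ structure $\s$, and replacing the Luttinger coefficient by $n$ only scales the corresponding Seiberg--Witten value (by $\pm n$) without creating or destroying any basic class. The adjunction inequality of Theorem~\ref{thm:AdjIneq} only requires that $SW_{V_n}(\s)\neq 0$, so the lower bound~\eqref{eq:adjunction} it produces for $g_{V_n}(\alpha)$ depends solely on this common support and on $\alpha^2$. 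Hence, identifying $H_2(V_n;\Z)\cong \Z^4$ via $B,C,D_1,D_2$ for every $n$, the adjunction lower bounds for $V_n$ and $V_k$ agree.

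Next I would match the upper bounds. Every explicit surface we construct to bound the genus from above --- the symplectic representatives of $t_A A+t_B B+t_C C$, together with the various tubed or resolved representatives of other classes --- lives in the common complement of the surgery tori, a fixed piece shared by all $V_n$. The identity map on this complement carries such a surface in $V_n$ to a surface of the same genus in $V_k$ representing the corresponding class, so these upper bounds also coincide across the family.

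The hard part, and the reason this remains a conjecture, is that for classes of negative self-intersection the adjunction lower bound and the constructed upper bound do not meet. To prove equivalence of the full genus functions one would need to transport an \emph{arbitrary} minimal-genus representative from $V_n$ to $V_k$ without increasing its genus. A general representative of a class $\alpha$ cannot be isotoped off the surgery region, and since the gluing of $D^2\times T^2$ differs with $n$ there is no evident diffeomorphism --- nor any genus-preserving correspondence --- for surfaces that meet the surgery tori. Closing this gap would seem to require either a genus bound genuinely finer than the Seiberg--Witten support (one sensitive to the very data that separates the $V_n$), or a direct cut-and-paste argument along the boundary $T^3$ showing that the part of a surface lying in the surgered piece can always be replaced by a piece of no greater genus; establishing either is the central difficulty, and at present neither tool is available.
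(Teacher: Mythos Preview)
The statement you are addressing is a \emph{conjecture} in the paper, not a theorem: the paper offers no proof and explicitly says ``We cannot compare their genus functions, as the values of these functions on most homology classes of negative self-intersection are unknown at the moment.'' Your proposal correctly recognises this, and what you have written is not a proof but rather an articulation of the evidence for the conjecture together with a clear identification of the obstacle. In that sense your write-up is entirely consistent with the paper's own treatment.

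Your heuristic reasoning in fact matches and slightly sharpens the paper's motivation. The paper's Remark following the analogous conjecture for the $Z_n$ notes that the Seiberg--Witten functions of the family share the same support while differing only in value, and that the adjunction inequality is insensitive to the value; you make this explicit by invoking the surgery formula as in Proposition~\ref{prop:SWcalc}. Likewise, your observation that all explicitly constructed representatives live in the common complement of the surgery tori is exactly the mechanism behind the paper's earlier remark (for $Z_n$) that ``the constructions are still valid'' across the family. Your final paragraph, naming the two possible routes to a proof (a finer genus bound sensitive to the Seiberg--Witten value, or a cut-and-paste argument across the surgery region), goes somewhat beyond what the paper says but is a reasonable summary of what would be needed. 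There is nothing to correct here; just be aware that you have written an explanation of why the conjecture is open rather than a proof, which is all that can presently be done.
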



In contrary to $AP$, in the four-manifold $BK$ it is easy to find
tori and spheres. Indeed, the homology classes $D_1, D_2$ admit
toric representatives. In addition, when applying the two
blow-ups in $S_0^3(T)\times S^1$, we can arrange the blow-ups so that
the resulting four-manifold has a $(-2)$-sphere disjoint from
the genus 2 surface along which the normal sum is taken. Therefore
this sphere embeds into $BK$ (and indeed to all $V_n$'s).

\end{document}